\documentclass[a4paper,11pt]{amsart}

\pdfoutput=1

\usepackage[text={400pt,660pt},centering]{geometry}

\usepackage{amsthm, amssymb, amsmath, amsfonts, mathrsfs}
\usepackage{mathtools}
\usepackage{scalerel} 
\usepackage{stackrel}
\usepackage[colorlinks=true, pdfstartview=FitV, linkcolor=blue, citecolor=blue, urlcolor=blue,pagebackref=false]{hyperref}
\usepackage{esint} % contains strokedint
\usepackage{MnSymbol} % contains notin inversed (which is \nowns)
\usepackage{booktabs} % nice tables

\usepackage{microtype}

\parskip= 2pt

\setcounter{tocdepth}{2}

% Comment this part before pusblishing
%\usepackage[notcite,notref,color]{showkeys}
%\definecolor{labelkey}{gray}{.8}
%\definecolor{refkey}{gray}{.8}

%\definecolor{darkgreen}{rgb}{0,0.5,0}
%\definecolor{darkblue}{rgb}{0,0,0.7}
%\definecolor{darkred}{rgb}{0.9,0.1,0.1}

\newtheorem{proposition}{Proposition}
\newtheorem{theorem}[proposition]{Theorem}
\newtheorem{lemma}[proposition]{Lemma}
\newtheorem{corollary}[proposition]{Corollary}

\theoremstyle{remark}
\newtheorem{remark}[proposition]{Remark}

\theoremstyle{definition}

\newtheorem{hypothesis}[proposition]{Hypothesis}

\numberwithin{equation}{section}
\numberwithin{proposition}{section}
\numberwithin{table}{section}

\renewcommand{\ge}{\geqslant}
\renewcommand{\leq}{\leqslant}
\renewcommand{\geq}{\geqslant}

\newcommand{\mcl}{\mathcal}

\newcommand{\A}{\mcl A}

\newcommand{\Ll}{\left}
\newcommand{\Rr}{\right}

\newcommand{\rhs}{right-hand side}
\newcommand{\1}{\mathbf{1}}
\newcommand{\R}{\mathbb{R}}

\newcommand{\N}{\mathbb{N}}

\newcommand{\Zd}{{\mathbb{Z}^d}}

\renewcommand{\epsilon}{\varepsilon}
\renewcommand{\d}{{\mathrm{d}}}

\newcommand{\Ind}[1]{\mathbf{1}_{\left\{#1\right\}}}

%%% Spin system
\newcommand{\T}{\mathbb{T}}

\newcommand{\x}{\mathbf{x}}
\DeclareMathOperator{\inte}{int}

%%%

\title[Antiferromagnetic Potts Model on the Tree]{Power Law Decay at Criticality for the q-State Antiferromagnetic Potts Model on Regular Trees}
\author[C.\ Gu, W.\ Wu, and K.\ Yang]{Chenlin Gu, Wei Wu, Kuan Yang}

\address[Chenlin Gu]{Mathematics Department, NYU Shanghai \& NYU-ECNU Institute of Mathematical Sciences,  China}

\address[Wei Wu]{Mathematics Department, NYU Shanghai \& NYU-ECNU Institute of Mathematical Sciences,  China}

\address[Kuan Yang]{John Hopcroft Center for Computer Science, Shanghai Jiao Tong University, China}

\begin{document}

\begin{abstract}
We present a proof of the power law decay of magnetic moment for the $q$-state antiferromagnetic Potts model on the regular tree at critical temperature, and also justify that the exact exponent is $\frac{1}{2}$. Our proof relies on the assumption of the uniqueness at critical temperature, which has been established for $q=3,4$, and for $q \ge 5$ with large degree.  An iterative contraction inequality is developed for independent interests.

\bigskip

\noindent \textsc{MSC 2010:} 05C99, 60K35, 82B20. 

\medskip

\noindent \textsc{Keywords:} antiferromagnetic Potts model, Gibbs measure, infinite regular tree, critical exponent. 

\end{abstract}
\maketitle

%\tableofcontents

%
%
%
%
%%%%%%%%%%%%%%%%%%%%%%%%%%%%%%%%%%%%%%%%%%%%%%%%%%%%%%%%%%%%%%
%%%%%%%%%%%%%%%%%%%%%%%%%%%%%%%%%%%%%%%%%%%%%%%%%%%%%%%%%%%%%%
%
%
%
%
\section{Introduction}

We study the Gibbs measure of the antiferromagnetic (AF for short) Potts models on regular trees. Let $\T^{d}_{n}$ be the $d$-ary tree, which is a rooted tree of height $n$ and  every vertex except leaves has $d$ children. Here the height means the graph distance from the root to leaves. We then use $\partial \T^{d}_{n}$ for the set of leaves and $\inte(\T^{d}_{n}) := \T^{d}_{n} \setminus \partial \T^{d}_{n}$ its interior. Let $q \inte \N^{+}$ be the number of state for the spin in our model and let $[q] = \{1,2,3, \cdots, q\}$ be all possible states.
%The Gibbs measure plays a central role in statistical physics, and also has prominent applications in combinatorics and computer science.
%Let $G = (V, E)$ be a finite graph. The Potts model has two parameter $q$ and $\beta$, where $q \in \N^{+}$ is the number of states for the spin in the Potts model and $\beta$ is an interaction parameter. Let $[q] = \{1,2,3, \cdots, q\}$. Given a configuration $\sigma \in [q]^V$, define a Hamiltonian
%$H_G(\sigma) = - \sum_{u, v \in V, (u, v)\in E} \Ind{\sigma_u = \sigma_v}$.
%The partition function and the Gibbs measure of the model is defined by $Z_{G,\beta,q} = \sum_{\sigma \in [q]^V} \exp(-\beta H_G(\sigma))$ and $\mu_{G,\beta,q}(\sigma) = \exp(-\beta H_G(\sigma))/Z_{G,\beta,q}$. When $\beta > 0$, it is the ferromagnetic Potts model where the adjacent spins tend to be equal, and when $\beta < 0$, it is the antiferromagnetic Potts model where adjacent spins tend to be different.
%
%Let $\T^{d}_{n}$ be the $d$-ary tree, which is a rooted tree of height $n$ and  every vertex except leaves has $d$ children. Here the height means the graph distance from the root to leaves, and we use $\partial \T^{d}_{n}$ for the set of leaves. 
We define $\sigma \in [q]^{\T^{d}_{n}}$ the spin on $d$-ary tree, and associate it with a Hamiltonian $H^\xi_{\T^{d}_{n}}$
\begin{align}
    H^\xi_{\T^{d}_{n}}(\sigma) = - \sum_{u, v \in \T^{d}_{n}, u \sim v} \Ind{\sigma_u = \sigma_v},
\end{align}
where $\xi : \partial \T^{d}_{n} \to [q]$ is the boundary condition for $\sigma$ i.e. $\sigma_{\vert \partial \T^{d}_{n}} = \xi$.
We define the Gibbs measure as 
\begin{align}
    \mu^\xi_{\T^{d}_{n}, \beta, q}[\sigma] = \frac{1}{Z^\xi_{ \T^{d}_{n}, \beta, q}} \exp\Ll(- \beta H^\xi_{\T^{d}_{n}}(\sigma) \Rr),
\end{align}
where the partition function is defined as 
\begin{align}
\label{e.af}
    Z^\xi_{\T^{d}_{n},\beta, q} = \sum_{\sigma \in [q]^{\T^{d}_{n}},  \sigma_{\vert \partial \T^{d}_{n}} = \xi}\exp\Ll(- \beta H^\xi_{\T^{d}_{n}}(\sigma) \Rr).
\end{align}
When $\beta > 0$, it is the ferromagnetic Potts model where the adjacent spins tend to be equal, and when $\beta < 0$, it is the antiferromagnetic Potts model (adjacent spins tend to be different) and it is the object we would like to study.

It is clear, that the probability of each spin configuration in the AF Potts model only depends on the number of monochromatic edges, i.e., edges that connecting two identical spins. Thus by setting
\begin{align}\label{eq.Weight}
    p := e^\beta \in (0,1),   \qquad  W^\xi_{n,p,q}(\sigma) := p^{\#\{\{u,v \}: u,v \in \T^d_n,\, u \sim v, \, \sigma_u = \sigma_v\}} =\exp\Ll(- \beta H^\xi_{\T^{d}_{n}}(\sigma) \Rr),
\end{align}
and using $\mu^\xi_{n, p, q}$ short for $\mu^\xi_{\T^{d}_{n},\beta, q}$, $Z^\xi_{n, p, q}$ short for $Z^\xi_{\T^{d}_{n},\beta, q}$, we obtain that 
\begin{align*}
Z^\xi_{n,p, q} &= \sum_{\sigma \in [q]^{\T^{d}_{n}},  \sigma_{\vert \partial \T^{d}_{n}} = \xi} W^\xi_{n,p,q}(\sigma),\\
\mu^\xi_{n,p,q}[\sigma] &= W^\xi_{n,p,q}(\sigma)/Z^\xi_{n, p, q}.
\end{align*}

We use $*$ to indicate the root and use the dictionary notation $\{1,2,3, \cdots, d\}^{\N^+}$ for other vertices on the tree. One of the most well-studied problem in statistical physics is the so-called uniqueness and non-uniqueness phase transition of the Gibbs measure on (infinite) graphs. Roughly, the system has a unique Gibbs state if the spin at each vertex is ``not too sensitive'' to the value of the spins at long distances \cite{Dob}. In the case of regular trees, it is shown by \cite{BW} that the uniqueness of Gibbs state is equivalent to the maximal discrepancy of the probability distribution at the root tends to zero, namely,
\begin{align}
\label{e.unique}
    \limsup_{n \to \infty} \max_{\xi : \partial \T^{d}_{n} \to [q]} \mu^\xi_{n,p,q}[\sigma_* = 1] = \frac{1}{q}.
\end{align}

Dobrushin \cite{Dob} formulated an explicit condition that guarantees the uniqueness of Gibbs states, which implies the AF $q$-state Potts model satisfies the uniqueness condition whenever either $\beta\leq  \frac{C_q}{\Delta}$, or $q>2\Delta$, where 
$\Delta$ is the maximal degree of the graph (see also \cite{SSabs}). These conditions are far from sharp, and one expects that for the AF $q$-state Potts model on a regular lattice $\Zd$, there is a critical temperature $\beta_c(d)$, such that the model has a unique Gibbs state for all $\beta < \beta_c$, and exhibit multiple Gibbs states for $\beta>\beta_c$. Very few is known for the AF $q$-state Potts model on $\Zd$: there is some improvement for the Dobrushin uniqueness condition on $\mathbb{Z}^2$ (see, e.g., \cite{GJMP} for AF Potts model and~\cite{BD2012} for ferromagnetic Potts model); proof of the existence of the multiple Gibbs states in very high dimension and very low temperature \cite{Pe, FS, PS}; and proof of the uniqueness of Gibbs state for $3$-coloring on $\mathbb{Z}^2$ \cite{RS} (based on the result of the height function representation in \cite{DCetc}). Otherwise, the conjecture remains largely open on $\Zd$. 

In this paper, we focus on the AF Potts model on regular trees and study its critical exponent. We assume the following hypothesis of the uniqueness of Gibbs states on trees throughout the paper. Notice that for our main result, Theorem \ref{thm.main},  we assume the following hypothesis with the critical value $p= 1 - \frac{q}{d+1}$. 

\begin{hypothesis}\label{hyp}
Let $q, d \in \N^+$ and $q \geq 2$. Also let $p  \in [1 - \frac{q}{d+1}, 1) \cap (0,1)$. Assume that the $q$-state AF Potts model on $d$-ary tree with parameter $p$, defined via the measure \eqref{eq.Weight}, has a unique Gibbs state in the sense of \eqref{e.unique}.
\end{hypothesis}

This uniqueness/non-uniqueness assumption is confirmed in several cases. In particular, the uniqueness/non-uniqueness threshold for proper $q$-colorings (i.e., the $\beta= -\infty$ case in \eqref{e.af}) was proved by Jonasson~\cite{Jon}, building upon the work of Brightwell and Winkler~\cite{BW}. Jonasson showed that the model on the $d$-ary tree exhibits uniqueness if $q > d+1$. When $q \leq d + 1$, non-uniqueness follows from the existence of so-called ``frozen'' colorings and semitranslation-invariant Gibbs measures~\cite{BW}. For the general AF Potts model, it has been known since the 80s that non-uniqueness holds when $p< p_c$, see~\cite{PLM83}. For fixed small values of $q$, it is proved that uniqueness holds on the $d$-ary tree when $p \geq p_c$ and $p > 0$ (see~\cite{GGY2018} for the $q = 3$, $d\geq 2$ case and the $q=d=4$ case, and see~\cite{BBR2020} for the $q = 4$, $d\geq 4$ case). A recent work \cite{bencs2022uniqueness} also claims the uniqueness for $q \geq 5$, $p \geq p_c(q,d)$ with very large $d$.

Our main results is based on Hypothesis~\ref{hyp}. Like many statistical physics model, at the critical temperature, the AF Potts model on the $d$-ary tree also illustrates a power law decay, and the critical exponent is universal for the color number $q$ and the degree $d$. 

\begin{theorem}[Power law decay]\label{thm.main}
Under Hypothesis~\ref{hyp} and for the critical case $p_c = 1 -  \frac{q}{d+1} > 0$, the convergence of marginal probability follows the power law that 
 \begin{align}\label{eq.power}
\lim_{n \to \infty} \frac{1}{n} \Ll(\max_{\xi : \partial \T^{d}_{n} \to [q]}  \Ll\vert \mu^\xi_{n,p_c,q}[\sigma_* = 1] - \frac{1}{q}\Rr\vert\Rr)^{-2} = \frac{d^2-1}{6d^2}\Ll(\frac{q^2}{q-1}\Rr)^2.    
 \end{align}
 \end{theorem}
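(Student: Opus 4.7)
The plan is to exploit the tree recursion that expresses the root marginal of $\mu^\xi_{n,p,q}$ in terms of the marginals on the $d$ principal subtrees. Writing $\nu^\xi_n(i):=\mu^\xi_{n,p,q}[\sigma_*=i]$ and $\xi^{(1)},\ldots,\xi^{(d)}$ for the boundaries induced on these subtrees, summation over the child spins yields the product recursion
\begin{equation}\label{eq.plan.rec}
\nu^\xi_n(i)\ \propto\ \prod_{k=1}^d\Bigl(1-(1-p)\,\nu^{\xi^{(k)}}_{n-1}(i)\Bigr),\qquad i\in[q],
\end{equation}
which I would take as the starting point. At the critical value $p_c=1-\tfrac{q}{d+1}$ the single-site map linearized at the uniform distribution has derivative equal to $-1$, the antiferromagnetic eigenvalue; this is precisely what forces the $1/\sqrt n$ rather than exponential decay.

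I would first specialize to the monochromatic boundary $\xi^{\star}\equiv 1$, for which the $S_{q-1}$-symmetry permuting colors $2,\ldots,q$ reduces \eqref{eq.plan.rec} to a one-dimensional recursion on $\alpha_n:=\nu^{\xi^{\star}}_n(1)$. Setting $\eta_n:=\alpha_n-\tfrac1q$ and expanding around the critical fixed point produces a map $f$ with $\eta_n=f(\eta_{n-1})$ and
\[
f(\eta)\ =\ -\eta+C\eta^2+D\eta^3+O(\eta^4),\qquad C\ =\ \frac{q(d-1)(q-2)}{2d(q-1)},
\]
where $D$ is obtained analogously from the cubic binomial terms in the numerator and denominator. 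Composing $f$ with itself cancels the linear and quadratic contributions and gives
\[
\eta_{n+2}\ =\ \eta_n-b\,\eta_n^3+O(\eta_n^4),\qquad b\ =\ 2(C^2+D).
\]
The key algebraic verification is that the various cubic pieces telescope to the clean identity
\[
b\ =\ \frac{(d^2-1)\,q^4}{6\,d^2(q-1)^2},
\]
matching the constant in~\eqref{eq.power}. Granting $\eta_n\to 0$, which is exactly Hypothesis~\ref{hyp}, a Cesaro/ODE comparison applied to $\eta_{n+2}^{-2}-\eta_n^{-2}\to 2b$ yields $\tfrac1n\eta_n^{-2}\to b$, with the odd and even subsequences giving the same limit. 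Since $\xi^{\star}$ is admissible, this already produces the lower bound in Theorem~\ref{thm.main}.

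The matching upper bound, which I expect to be the main obstacle, states that no boundary condition generates a root discrepancy asymptotically larger than $|\eta_n|$. This is the role of the \emph{iterative contraction inequality} announced in the abstract. Writing $\eps^\xi_n(i):=\nu^\xi_n(i)-\tfrac1q$ (so that $\sum_i\eps^\xi_n(i)=0$) and $M^\xi_n:=\max_i|\eps^\xi_n(i)|$, I would expand \eqref{eq.plan.rec} to cubic order and use the zero-sum constraint together with a convexity/symmetrization over the $d$ subtree contributions to show that the $\ell^\infty$ norm of the deviation vector is maximized, iteration by iteration, by the rank-one monochromatic configuration. This would yield
\[
M^\xi_n\ \leq\ |f(M^\xi_{n-1})|+\text{lower order},
\]
uniformly in $\xi$, so that the scalar recursion from the previous paragraph dominates the general one. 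The delicate point is to preserve the \emph{exact} cubic constant $b$ through the non-symmetric cross terms between colors and to propagate the error uniformly in the tree depth; combined with Hypothesis~\ref{hyp} (which forces $M^\xi_n\to 0$) and the lower bound, this completes the proof.
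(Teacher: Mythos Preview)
Your lower-bound analysis via the pure boundary condition is essentially the paper's argument in a different parametrization (you work with $\eta_n=\alpha_n-\tfrac1q$ whereas the paper uses the ratio $r_n=\nu_n(2)/\nu_n(1)$), and the Ces\`aro step is the same.

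The upper bound, however, has a genuine gap. Your plan is to show a \emph{one-step} domination: that the $\ell^\infty$ deviation is maximized ``iteration by iteration'' by the monochromatic configuration, yielding $M^\xi_n\leq |f(M^\xi_{n-1})|+\text{lower order}$. This is false at the level needed. The paper gives an explicit counterexample (the ``frozen'' boundary for $q=3$, $d=3$) showing that the pure boundary does \emph{not} maximize the root marginal at a single step; for $p$ near $0$ the frozen boundary beats it by an amount that is not lower order. Any symmetrization/convexity argument at the one-step level will run into this obstruction, and the ``lower order'' fudge cannot be made $o(\eta^3)$, which is exactly what you would need to preserve the cubic constant $b$.

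The paper's resolution is structurally different: it proves a \emph{two-step} inequality $r^*_{n+2}\leq (f\circ f)(r^*_n)$, and even this holds only for $r^*_n$ in a small neighborhood of $1$ (this is where Hypothesis~\ref{hyp} enters, to ensure the iterates eventually reach that neighborhood). The proof is not a soft symmetrization. One first reduces $r^*_{n+2}$ to a maximum of an explicit rational function $h(\x)$ over a finite set $\mathcal A(r^*_n)$ of extremal configurations (using that each coordinate enters $h$ as a M\"obius map, so extrema sit at endpoints, together with an AM--GM step to force periodicity across the first generation). One then computes the first- and second-order Taylor expansion of $h$ at $\x=\mathbf 1$ over $\mathcal A(r)$ and shows the maximizer is the specific vertex $x^u_k=1+(r-1)\delta_{k,2}$, whose value is exactly $(f\circ f)(r)$. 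The first derivative pins down $x^u_2=r$; the second derivative, via a quadratic in $\sum_k\theta^u_k$, pins down the remaining coordinates. This two-step, near-fixed-point analysis is the ``iterative contraction inequality'' alluded to in the abstract, and it is not recoverable from a one-step $\ell^\infty$ argument.
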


 A direct corollary of the main theorem is a upper and a lower bound for the convergence rate at critical temperature: there exist $0 < c(d,q) < C(d,q) < \infty$ such that for $p_c = 1 -  \frac{q}{d+1} > 0$ 
 \begin{align}\label{eq.CriticalBound}
 \frac{c}{\sqrt{n}} \leq \max_{\xi} \Ll\vert \mu^\xi_{n,p_c,q}[\sigma_* = 1] - \frac{1}{q}\Rr\vert \leq \frac{C}{\sqrt{n}}.     
 \end{align}

 \begin{remark}
 As mentioned above, Hypothesis \ref{hyp} is justified for certain values of $(q,d)$.  Therefore we obtained \eqref{eq.power}  and \eqref{eq.CriticalBound} for the case
 \begin{itemize}
     \item $q=3, 4$, for all $d \geq q$,  and $p_c = 1 -  \frac{q}{d+1}$;
     \item for any $q \geq 5$, $ d\ge d_0(q)$ sufficiently large, and $p_c = 1 -  \frac{q}{d+1}$.
 \end{itemize}
   We also remark that the critical exponent is only valid for $d \geq 2$, because for $d=1$ it is either $q=1$ a trivial case or $q=2$ for two-coloring problem with long range order.
 \end{remark}

 \begin{remark}
 Using a similar, and somewhat simpler argument, under  Hypothesis \ref{hyp}, we also obtain the exact exponential decay rate for the two point function in the sub-critical phase, namely 
  \begin{align}\label{eq.expo}
    \lim_{n \to \infty} \frac{1}{n}\log \Ll(\max_{\xi : \partial \T^{d}_{n} \to [q]}   \Ll\vert \mu^\xi_{n,p,q}[\sigma_* = 1] - \frac{1}{q}\Rr\vert\Rr) = \log \Ll(\frac{d(1-p)}{p+q-1}\Rr).
 \end{align}
 Again this is verified for $q=3, 4$, for all $d \geq q$,  and $p>p_c$; and for any $q \geq 5$, $ d\ge d_0(q)$ sufficiently large, and $p>p_c$.
 \end{remark}
 
 \bigskip

 We now describe the strategy to prove our main result. Let $\sigma^u$ be the subset of spins on the tree rooted by $u \in \{1,2,\cdots,d\}$ and $\xi^u$ the boundary condition on this subtree. Then one can obtain a recurrence equation
\begin{align}\label{eq.Recurrence}
\mu^{\xi}_{n,p,q}[\sigma_* = 1] = \frac{ \prod_{u = 1}^d  \Ll((p-1)\mu^{\xi^u}_{n-1,p,q}[\sigma_u = 1]+1\Rr)}{\sum_{i=1}^q\prod_{u = 1}^d \Ll((p-1)\mu^{\xi^u}_{n-1,p,q}[\sigma_u = i]+1\Rr)}.    
\end{align}
A useful technique is to consider the ratio in order to simplify the expression  
\begin{equation}\label{eq.ratio1}
\begin{split}
r_n(\xi) &:= \frac{\mu^\xi_{n,p,q}[\sigma_* = 2]}{\mu^\xi_{n,p,q}[\sigma_* = 1]} = \prod_{u = 1}^d \Ll( \frac{(p-1)\mu^{\xi^u}_{n-1,p,q}[\sigma_u = 2]+1}{(p-1)\mu^{\xi^u}_{n-1,p,q}[\sigma_u = 1]+1} \Rr), \\
r^*_n &:= \max_{\xi} r_n(\xi), 
\end{split}
\end{equation}
and then prove the maximal ratio $r^*_n$ converges to $1$ at a power law rate. 

Notice that the ratio $r_n(\xi)$ depends on the boundary condition, which changes with respect to $n$ in \eqref{eq.Recurrence}. Only for the boundary condition that is a pure state of one color, the expression can be closed as an iteration. We call this situation \emph{pure boundary condition} and we use $\mu^i_{n,p,q}$ for the Gibbs measure with pure boundary condition $\xi \equiv i \in [q]$. Afterwards, We define a quantity  
\begin{align}\label{eq.ratio2}
    r_n :=  \frac{\mu^2_{n,p,q}[\sigma_* = 2]}{\mu^2_{n,p,q}[\sigma_* = 1]},
\end{align}
and the sequence $(r_n)_{n \in \N}$ satisfies an iteration
\begin{align}\label{eq.f}
r_{n+1} = f(r_n), \qquad f(x) = \Ll(\frac{p x + (q-1)}{p + (q-2) +x}\Rr)^d.
\end{align}
The iteration function $f$ here plays an important role in the analysis of convergence. By an heuristic asymptotic analysis, we have 
\begin{align}\label{eq.contraction}
f(x) - 1 = \Ll(1 + \frac{(1-p)(1-x)}{p + (q-2) +x}\Rr)^d - 1 \simeq  \frac{d(1-p)(1-x)}{p + (q-2) +x}.
\end{align}
This implies that $x=1$ is a fixed point, and the uniqueness regime corresponds to the contraction rate less than $1$ near the fixed point that 
\begin{align}\label{eq.pc}
\frac{d(1-p)}{p + q - 1} \leq 1   \Longleftrightarrow  p \geq 1 - \frac{q}{d+1}.
\end{align}
By analysis of the two-step iteration function $(f \circ f)$, we obtain $r_{2n} \simeq 1 + \frac{C}{\sqrt{2n}}$. The critical exponent $\frac{1}{2}$ and the constant $\frac{d^2-1}{6d^2}$ in \eqref{eq.power} are closely related to the analytical expansion of $(f \circ f)$ near $1$.

The above reasoning gives the critical exponent for the pure boundary conditions. A natural question to ask, is whether the pure boundary condition dominates other boundary conditions. This is unfortunately false. For the ferromagnetic Ising model, it is known that the magnetization is an increasing function of the boundary condition, thanks to the GKS inequality \cite{GKS}. For the ferromagnetic Potts model, the GKS inequality no longer applies, yet a weaker form of the positive association can be derived via the random cluster representation with the FKG inequality \cite{Gri}. Such classical representation and FKG inequality are in general false for the AF Potts model. Some correlation inequalities, such as the first Griffith's inequality, were proved for the AF Potts model on bipartite graphs \cite{ferreira1999antiferromagnetic}, based on the Swendsen-Wang-Kotecky algorithm, but they are not sufficient to deduce the boundary domination. 

To explain better the non-existence of boundary condition domination, we give the following example in Figure~\ref{fig.Frozen}. It is an AF Potts model with $q = 3$ and $d = 3$, then for any $p \in (0,1)$ it is always in the subcritical regime. For every fixed boundary condition, there are $3^3=27$ configurations. The boundary condition for the example on the left-hand side is the one of pure boundary condition, but we see that this configuration with $\sigma_* = 3$ also has no monochromatic edges, which implies that $\mu^1_{2,p,q}[\sigma_* = 3] = \mu^1_{2,p,q}[\sigma_* = 2] \geq \frac{1}{27}$ and $\mu^1_{2,p,q}[\sigma_* = 1] \leq \frac{25}{27}$. On the other hand, the configuration on the right-hand side is the one with so called frozen boundary $\xi'$. If $p = 0$, then it is the configuration which charges all the weight because the other configurations have at least one monochromatic edge. When $p > 0$, we have $\mu^{\xi'}_{2,p,q}[\sigma_* = 1] \geq \frac{1}{1 + 26 p}$, so for $p$ very close to $0$, this probability is larger than $\mu^1_{2,p,q}[\sigma_* = 1]$. This example can be generalized to any $n$ which justifies that the pure boundary is not the maximiser for the marginal probability at the root.
\begin{figure}[h!]
    \centering
    \includegraphics[scale = 0.5]{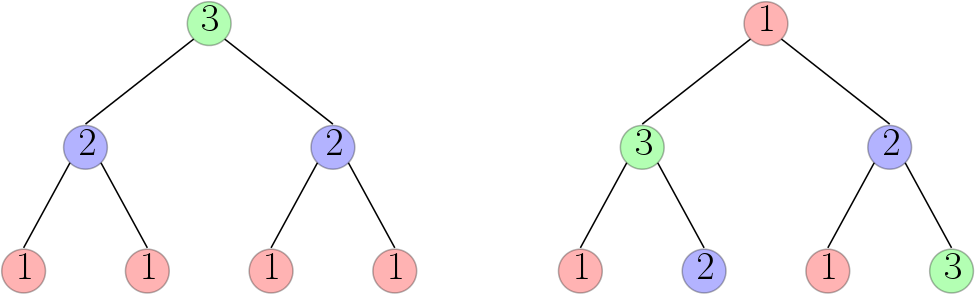}
    \caption{The example on the left-hand side is the one with the pure boundary condition, while the one on the right-hand side is with the frozen boundary.}
    \label{fig.Frozen}
\end{figure}

Despite the lack of boundary condition domination, it is also not clear which boundary condition realizes $r^*_n$. Thus, the analysis of $(r_n)_{n \in \N}$ above  only gives a lower bound of the discrepancy probability. However, we will prove that 
\begin{align}\label{eq.ratioDomination}
\text{ for all } r^*_{n} = 1 + o(1), \qquad   \text{we have } r^*_{n+2} \leq (f \circ f)(r^*_{n}).
\end{align}
In another word, when $r^*_{n}$ is very close to $1$, its upper bound is also dominated by the two-step iteration function $(f \circ f)$. Therefore, the upper bound and lower bound follow the same dynamic iteration near the fixed point, which implies Theorem~\ref{thm.main}.

Let us add more comments on \eqref{eq.ratioDomination} and the iteration. The upper bound iteration \eqref{eq.ratioDomination} here is obtained quite qualitatively, i.e. we do not go further to give all possible values of $r_n^*$ such that $r^*_{n+2} \leq (f \circ f)(r^*_{n})$ holds. That is why we need to assume Hypothesis~\ref{hyp}, which guarantees $r_n^*= 1+o(1)$ for large $n$. On the other hand, we obtained precise contraction rate of the two-step iteration $(f \circ f)$ and its generalization. Therefore, if we could enlarge the regime such that \eqref{eq.ratioDomination} holds, and show that $r_n^*$ enters this regime for large $n$, it will give a unconditional proof for Theorem \ref{thm.main}.     
 
\bigskip 

The rest of the paper is organized as following. In Section 2, we prove the lower bound of $(r^*_n)_{n \in \N}$ by studying the pure boundary iterations. In Section 3, we formulate the general two-step iterations, and justify the asymptotic upper bound \eqref{eq.ratioDomination}, when $r_n^*$ lies in a small neighborhood of $1$. In Section 4, we give a proof of the contraction rate of the $(f \circ f)$ type maps, thus concluding Theorem \ref{thm.main}, and is of independent interest. 

\subsection*{Notations}
Throughout the paper, we will use the notation of Dirac function $\delta_{i}(j) = \delta_{i,j} = \Ind{i=j}$. We also recall that we use the dictionary notation $\{1,2,3, \cdots, d\}^{\N^+}$ for the vetrices of the tree, and $\vert u \vert$ for the word length. Then for $u \in \{1,2,3, \cdots, d\}^{\N^+}$ with $\vert u \vert = k$, we use $\xi^u$ to represent the boundary condition of the subtree rooted at $u$, and  $\mu^{\xi^u}_{n-k, p, q}, W^{\xi^u}_{n-k,p,q}$ respectively for its probability space  and the weight function.

\section{Lower bound by pure boundary condition}\label{sec.pure}
In this section, we prove the lower bound in Theorem~\ref{thm.main}. Our method is to establish the one-step iteration, which has a closed expression for the pure boundary condition case, i.e. $r_{n+1} = f(r_n)$ in Lemma~\ref{lem.pure}. Therefore, we will focus on the iteration function $f$ defined in \eqref{eq.f}.

\subsection{One-step iteration}
In this part, we establish the recurrence called \emph{one-step iteration}. 
\begin{lemma}\label{lem.recurrence}
For any $p \in (0,1)$, integers $d, n \geq 1, q \geq 2$, we have the following recurrence for the ratio function
\begin{align}\label{eq.ratio1New}
    \frac{\mu^\xi_{n,p,q}[\sigma_* = 2]}{\mu^\xi_{n,p,q}[\sigma_* = 1]} = \prod_{u = 1}^d \Ll( \frac{(p-1)\mu^{\xi^u}_{n-1,p,q}[\sigma_u = 2]+1}{(p-1)\mu^{\xi^u}_{n-1,p,q}[\sigma_u = 1]+1} \Rr).
\end{align}
\end{lemma}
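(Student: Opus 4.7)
The plan is to prove the recurrence by conditioning on the spin at the root and exploiting the fact that, once $\sigma_\ast$ is fixed, the $d$ subtrees rooted at the children of $\ast$ decouple. The key algebraic observation is the identity $p^{\mathbf{1}_{\{\sigma_u = i\}}} = 1 + (p-1)\mathbf{1}_{\{\sigma_u = i\}}$, which will let me rewrite the ``edge factor'' between $\ast$ and each child $u$ as a linear expression in the marginal probability at $u$ in the subtree measure.

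Concretely, I will first write
\begin{equation*}
\mu^{\xi}_{n,p,q}[\sigma_\ast = i] \;=\; \frac{1}{Z^{\xi}_{n,p,q}} \sum_{\sigma : \sigma_\ast = i,\, \sigma|_{\partial} = \xi} W^{\xi}_{n,p,q}(\sigma).
\end{equation*}
Since every monochromatic edge is either the edge $\{\ast,u\}$ for some child $u$ or lies entirely inside one of the subtrees, the weight factorizes as $\prod_{u=1}^d p^{\mathbf{1}_{\{\sigma_u = i\}}} W^{\xi^u}_{n-1,p,q}(\sigma^u)$. Summing over $\sigma^u$ with boundary $\xi^u$ and using the identity above, each factor becomes
\begin{equation*}
Z^{\xi^u}_{n-1,p,q}\bigl((p-1)\mu^{\xi^u}_{n-1,p,q}[\sigma_u = i] + 1\bigr),
\end{equation*}
so that
\begin{equation*}
\mu^{\xi}_{n,p,q}[\sigma_\ast = i] \;=\; \frac{\prod_{u=1}^d Z^{\xi^u}_{n-1,p,q}\bigl((p-1)\mu^{\xi^u}_{n-1,p,q}[\sigma_u = i] + 1\bigr)}{Z^{\xi}_{n,p,q}}.
\end{equation*}

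Taking the ratio of this expression at $i=2$ and $i=1$ then cancels both $Z^{\xi}_{n,p,q}$ and all the subtree partition functions $Z^{\xi^u}_{n-1,p,q}$, yielding exactly \eqref{eq.ratio1New}. The argument is purely combinatorial; the only care needed is to track boundary conditions correctly so that the tree Markov property applies and the children are conditionally independent given $\sigma_\ast$. I do not anticipate any real obstacle here: this is the standard tree recursion in the form most useful for the subsequent contraction analysis, with $1$ and $2$ playing the role of the two spin values whose ratio controls the maximal discrepancy $r_n^\ast$.
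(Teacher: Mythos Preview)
Your proposal is correct and is essentially the same argument as the paper's: both factor the weight over subtrees once $\sigma_\ast$ is fixed, rewrite the edge factor $p^{\delta_i(\sigma_u)}$ as $(p-1)\delta_i(\sigma_u)+1$, sum over each subtree to obtain $Z^{\xi^u}_{n-1,p,q}\bigl((p-1)\mu^{\xi^u}_{n-1,p,q}[\sigma_u=i]+1\bigr)$, and then take the ratio to cancel the partition functions. The only cosmetic difference is that the paper first writes the full expression for $\mu^{\xi}_{n,p,q}[\sigma_\ast=1]$ (i.e.\ equation \eqref{eq.Recurrence}) before taking the ratio, whereas you jump directly to the ratio.
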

Similar recursions are also used in \cite{GGY2018, BBR2020}, and the proof are almost the same. However, for completeness we present the proof as follows.
\begin{proof}
Using the weight function defined in \eqref{eq.Weight}
\begin{align*}
W^{\xi}_{n,p,q} (\sigma_* =1)=  \prod_{u = 1}^d \Ll(W^{\xi^u}_{n-1,p,q}(\sigma^u ) p^{\delta_1(\sigma_u)}\Rr),
\end{align*}
which implies
\begin{align}
 \mu^\xi_{n,p,q}[\sigma_* = 1] = \frac{\sum_{\sigma^1, \cdots, \sigma^d} \prod_{u = 1}^d \Ll(W^{\xi^u}_{n-1,p,q}(\sigma^u ) p^{\delta_1(\sigma_u)}\Rr)}{\sum_{j=1}^q\sum_{\sigma^1, \cdots, \sigma^d} \prod_{u = 1}^d \Ll(W^{\xi^u}_{n-1,p,q}(\sigma^u ) p^{\delta_j(\sigma_u)}\Rr)}.    
\end{align}
 Then we notice that conditioned on the state of $\sigma_*$, the spins on different subtrees are independent, so we have 
\begin{align}\label{eq.Recurrence0}
 \mu^\xi_{n,p,q}[\sigma_* = 1] = \frac{ \prod_{u = 1}^d \Ll(\sum_{\sigma^u} W^{\xi^u}_{n-1,p,q}(\sigma^u ) p^{\delta_1(\sigma_u)}\Rr)}{\sum_{j=1}^q \prod_{u = 1}^d \Ll(\sum_{\sigma^u} W^{\xi^u}_{n-1,p,q}(\sigma^u ) p^{\delta_j(\sigma_u)}\Rr)}. 
\end{align}
After a normalization by $Z^{\xi^u}_{n-1,\beta, q}$, this equation becomes
\begin{align*}
\mu^{\xi}_{n,p,q}[\sigma_* = 1] = \frac{ \prod_{u = 1}^d  \Ll(\sum_{j_u \in [q]}\mu^{\xi^u}_{n-1,p,q}[\sigma_u = j_u ] p^{\delta_1(j_u)}\Rr)}{\sum_{i=1}^q\prod_{u = 1}^d \Ll(\sum_{j_u \in [q]}\mu^{\xi^u}_{n-1,p,q}[\sigma_u = j_u ] p^{\delta_i(j_u)}\Rr)},
\end{align*}
and this can be further simplified by 
\begin{align*}
\sum_{j_u \in [q]}\mu^{\xi^u}_{n-1,p,q}[\sigma_u = j_u ] p^{\delta_i(j_u)} &= p\mu^{\xi^u}_{n-1,p,q}[\sigma_u = i] + \sum_{j_u \in [q] \setminus \{i\}}\mu^{\xi^u}_{n-1,p,q}[\sigma_u = j_u ] \\
&=  (p-1)\mu^{\xi^u}_{n-1,p,q}[\sigma_u = i]+1.    
\end{align*}
Therefore, we have
\begin{align*}
\mu^{\xi}_{n,p,q}[\sigma_* = 1] = \frac{ \prod_{u = 1}^d  \Ll((p-1)\mu^{\xi^u}_{n-1,p,q}[\sigma_u = 1]+1\Rr)}{\sum_{i=1}^q\prod_{u = 1}^d \Ll((p-1)\mu^{\xi^u}_{n-1,p,q}[\sigma_u = i]+1\Rr)}.    
\end{align*}
This gives us the desired result.
\end{proof}

Generally speaking, the one-step iteration is not a closed formula for $r_{n}(\xi)$, because as we move to different subtrees,  the boundary conditions $\xi^u$ are different from $\xi$. A very special case is the pure boundary situation, in which we obtain a closed formula.
\begin{lemma}\label{lem.pure}
For any $p \in (0,1)$, integers $d \geq 1, q \geq 2$ and the pure boundary condition $\xi \equiv 2$,  the ratio function ${r_n = \frac{\mu^2_{n,p,q}[\sigma_* = 2]}{\mu^2_{n,p,q}[\sigma_* = 1]}}$ satisfies the iteration $r_{n+1} = f(r_n)$ with $f$ defined in \eqref{eq.f}. 
\end{lemma}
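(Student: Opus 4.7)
The plan is to apply the one-step recurrence of Lemma~\ref{lem.recurrence} and exploit two symmetries of the pure boundary condition. First, when $\xi \equiv 2$ on $\partial \T^d_n$, each of the $d$ subtrees rooted at the children of $*$ inherits the pure boundary $\xi^u \equiv 2$ on $\partial \T^d_{n-1}$. Consequently $\mu^{\xi^u}_{n-1,p,q} = \mu^2_{n-1,p,q}$ for every $u \in \{1, \dots, d\}$, so the product in \eqref{eq.ratio1New} collapses to a $d$-th power. Second, the measure $\mu^2_{n-1,p,q}$ is invariant under any permutation of $[q]$ fixing $2$, which forces
\begin{equation*}
\mu^2_{n-1,p,q}[\sigma_u = 1] = \mu^2_{n-1,p,q}[\sigma_u = j], \qquad \forall j \in [q] \setminus \{2\}.
\end{equation*}
Write $b := \mu^2_{n-1,p,q}[\sigma_u = 2]$ and $a := \mu^2_{n-1,p,q}[\sigma_u = 1]$. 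Then $r_{n-1} = b/a$ and the normalization $b + (q-1)a = 1$ holds.

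Next I would apply Lemma~\ref{lem.recurrence} to pure boundary $\xi \equiv 2$ and rewrite
\begin{equation*}
r_n = \Ll(\frac{(p-1)b + 1}{(p-1)a + 1}\Rr)^d.
\end{equation*}
The remaining task is purely algebraic: convert the right-hand side into the desired $f(r_{n-1})$. Using $b + (q-1)a = 1$ one sees
\begin{equation*}
(p-1)b + 1 = pb + (1-b) = pb + (q-1)a, \qquad (p-1)a + 1 = pa + (1-a) = pa + b + (q-2)a.
\end{equation*}
Dividing numerator and denominator by $a$ and substituting $b/a = r_{n-1}$ yields
\begin{equation*}
\frac{(p-1)b + 1}{(p-1)a + 1} = \frac{p r_{n-1} + (q-1)}{p + (q-2) + r_{n-1}},
\end{equation*}
so $r_n = f(r_{n-1})$, which is the claim.

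There is no real obstacle here; the lemma is essentially the observation that pure boundary conditions close the recurrence, together with a short calculation. The only small caveat is the base case, which I would handle by a brief remark: at $n=0$ the root coincides with a leaf and $r_0 = \infty$, and the direct calculation at $n=1$ gives $r_1 = p^d$, consistent with $f(\infty) = p^d$. For $n \ge 1$ the ratios are finite and the iteration $r_{n+1} = f(r_n)$ proceeds without modification.
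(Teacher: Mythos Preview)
Your proof is correct and follows essentially the same approach as the paper: both exploit the permutation symmetry fixing color $2$ to reduce the marginals to two values, then substitute into the recurrence \eqref{eq.ratio1New} and simplify. The paper states the marginals directly as $\mu^2_{n,p,q}[\sigma_*=2]=\tfrac{r_n}{r_n+q-1}$ and $\mu^2_{n,p,q}[\sigma_*=1]=\tfrac{1}{r_n+q-1}$ and leaves the algebra implicit, whereas you carry out the computation in full; the content is the same.
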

\begin{proof}
When the boundary condition is pure $\xi \equiv 2$, the probability $\mu^2_{n,p,q}[\sigma_* = i]$ only takes two values, and we can calculate that 
\begin{align}\label{eq.pureProba}
\mu^2_{n,p,q}[\sigma_* = 2] = \frac{r_n}{r_n + q - 1}, \qquad \mu^2_{n,p,q}[\sigma_* = 1] = \frac{1}{r_n + q - 1}. 
\end{align}
We put them back to \eqref{eq.ratio1New} and obtain the iteration function $f$.
\end{proof}

\subsection{Convergence rate for the pure boundary condition}
The main result in this part is the following convergence rate in the special case of pure boundary condition.
\begin{proposition}\label{prop.pure}
For the critical case $p_c = 1 -  \frac{q}{d+1} > 0$, the convergence rate for the pure boundary condition is that 
 \begin{align}\label{eq.pure}
\lim_{n \to \infty} \frac{1}{n} \Ll(  \Ll \vert \mu^{1}_{n,p_c,q}[\sigma_* = 1] - \frac{1}{q} \Rr \vert \Rr)^{-2} = \frac{d^2-1}{6d^2}\Ll(\frac{q^2}{q-1}\Rr)^2.    
 \end{align}
 \end{proposition}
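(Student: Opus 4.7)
The plan is to reduce the proposition to the asymptotic analysis of the scalar iteration $r_{n+1} = f(r_n)$ from Lemma~\ref{lem.pure}, and then extract the exponent $\tfrac{1}{2}$ from the Taylor expansion of the \emph{two-step} map $g := f \circ f$ around its fixed point $1$. By the symmetry of the pure boundary condition $\xi \equiv 1$, the marginals $\mu^1_{n,p_c,q}[\sigma_* = j]$ are equal for every $j \neq 1$, so formula \eqref{eq.pureProba} (after the trivial relabeling) gives
$$
\mu^1_{n,p_c,q}[\sigma_* = 1] - \frac{1}{q} = \frac{(q-1)(r_n - 1)}{q(r_n + q - 1)}.
$$
Hypothesis~\ref{hyp} forces $\mu^1_{n,p_c,q}[\sigma_* = 1] \to 1/q$ and hence $r_n \to 1$; writing $y_n := r_n - 1$, it suffices to show $n^{-1} y_n^{-2} \to (d^2 - 1)/(6 d^2)$, because then $(\mu^1_{n,p_c,q}[\sigma_* = 1] - 1/q)^{-2} \sim (q^2/(q-1))^2\, y_n^{-2}$.

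A direct calculation yields $f'(1) = d(p-1)/(p+q-1)$, which at $p = p_c = 1 - q/(d+1)$ equals exactly $-1$, so $f$ sits at the parabolic boundary of contractivity and one must pass to $g = f \circ f$, for which $g'(1) = 1$. If we write $f(1+y) = 1 - y + c_2 y^2 + c_3 y^3 + O(y^4)$, a routine composition shows that the quadratic coefficient of $g(1+y) - 1 - y$ automatically vanishes (this is a consequence of $f'(1) = -1$ alone and requires no further input), and
$$
g(1+y) = 1 + y - 2(c_2^2 + c_3)\, y^3 + O(y^4).
$$
To compute $c_2, c_3$ explicitly, rewrite $f(1+y) = \Ll(\tfrac{1+ay}{1+by}\Rr)^d$ with $a = p_c/(p_c+q-1)$, $b = 1/(p_c+q-1)$, and observe the clean identities $a - b = -1/d$ and $b d = (d+1)/q$. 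Taking $\log$, expanding to order $y^3$, multiplying by $d$ and re-exponentiating gives
$$
c_2 = \frac{d-1}{2d} + \frac{d+1}{qd}, \qquad c_3 = -b^2 + \frac{b}{d} - b + \frac{1}{2d} - \frac{1}{3d^2} - \frac{1}{6},
$$
and the $q$-dependent contributions in $c_2^2 + c_3$ cancel miraculously (via $(d^2-1)/(qd^2) + (d+1)/(qd^2) = (d+1)/(qd)$), leaving
$$
c_2^2 + c_3 = \frac{(d-1)^2}{4 d^2} + \frac{1}{2 d} - \frac{1}{3 d^2} - \frac{1}{6} = \frac{d^2 - 1}{12 d^2}.
$$

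With the cubic expansion $y_{n+2} = y_n - \tfrac{d^2-1}{6 d^2}\, y_n^3 + O(y_n^4)$ in hand, the final step is the standard ODE-type asymptotic: expand
$$
y_{n+2}^{-2} - y_n^{-2} = \frac{d^2-1}{3 d^2} + O(y_n),
$$
and telescope separately along even and odd $n$ (using $y_n \to 0$ from Hypothesis~\ref{hyp} to ensure $\sum_{k \leq n/2} O(y_k) = o(n)$) to conclude $y_n^{-2}/n \to (d^2-1)/(6 d^2)$; combining with the opening display produces the stated constant. The main obstacle is the algebraic Step of expanding $f(1+y)$ to order $y^3$ and verifying the cancellations that transform the messy intermediate formula into $(d^2-1)/(12 d^2)$; the automatic vanishing of the $y^2$ coefficient of $g$ is a useful sanity check, but the $y^3$ coefficient is the only genuine computation, and its clean $q$-independence is what makes the critical exponent universal.
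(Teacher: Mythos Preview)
Your proof is correct and follows essentially the same route as the paper: reduce to the scalar recursion $r_{n+1}=f(r_n)$, Taylor-expand the two-step map $f\circ f$ around the fixed point to third order (your coefficient $-2(c_2^2+c_3)=-(d^2-1)/(6d^2)$ agrees with the paper's $c_3/3!$), telescope $y_n^{-2}$, and convert back via \eqref{eq.pureProba}. The one point worth flagging is that you invoke Hypothesis~\ref{hyp} to obtain $r_n\to 1$, whereas the paper deliberately avoids this: it proves $r_n\to 1$ unconditionally for the pure boundary via Proposition~\ref{prop.TwoStepfm} and Corollary~\ref{cor.rn}, so that Proposition~\ref{prop.pure} (and hence the lower bound in Theorem~\ref{thm.main}) stands free of the hypothesis.
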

We remark that for the pure boundary condition, the convergence of the ratio $\lim_{n \to \infty} r_n = 1$ follows directly from explicit computations in Proposition~\ref{prop.TwoStepfm} and Corollary~\ref{cor.rn}. Therefore the proposition above does not require assuming Hypothesis~\ref{hyp}.

The following elementary properties about $f$ are useful. 
\begin{lemma}[Elementary properties of $f$]\label{lem.Element}
For $p  \in [1 - \frac{q}{d+1}, 1) \cap (0,1)$, the following properties hold for $f$ defined in \eqref{eq.f}.
\begin{enumerate}
    \item Range: $f(x) \geq 1$ for all $x \in [0,1]$; $f(x) \in (0,1]$ for all $x \geq 1$.
    \item Fixed point: $f(x) = x$ admits a unique solution on $\R^+$ that $x=1$.
    \item Monotonicity: $f$ is decreasing.
    \item Contraction on one side: for every $x \geq 1$, $1 - f(x) \leq x - 1$.
\end{enumerate}
\end{lemma}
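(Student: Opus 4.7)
The plan is to establish all four properties by direct computation on the explicit function $f$, leveraging the substitution $f(x)=g(x)^d$ with $g(x)=\frac{px+(q-1)}{p+(q-2)+x}$, and observing that everything pivots on the identity $f(1)=1$.

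For property (3), I would compute
\[
g'(x) = \frac{p(p+q-2+x) - (px+q-1)}{(p+q-2+x)^2} = \frac{(p-1)(p+q-1)}{(p+q-2+x)^2}.
\]
Since $p\in(0,1)$ and $q\geq 2$, this is strictly negative, and $g(x)>0$ for $x\geq 0$, so $f = g^d$ is strictly decreasing on $[0,\infty)$. This immediately yields (3).

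For (1) and (2), I would simply observe that the numerator and denominator of $g(1)$ both equal $p+q-1$, giving $f(1)=1$. Combined with monotonicity from (3), this gives $f(x)\geq f(1)=1$ for $x\in[0,1]$ and $f(x)\leq 1$ for $x\geq 1$, while positivity of $g$ yields $f(x)>0$; this is (1). For (2), if $x<1$ then $f(x)\geq 1 > x$ and if $x>1$ then $f(x)\leq 1 < x$, so $x=1$ is the unique positive fixed point.

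For (4), which is the only genuinely non-trivial item, I would rewrite
\[
f(x) = \left(1 - t(x)\right)^d, \qquad t(x) := \frac{(1-p)(x-1)}{p+q-2+x},
\]
and first check that $t(x)\in[0,1-p]$ for all $x\geq 1$: the lower bound is clear, and the upper bound $t(x)\leq 1-p$ reduces to $x-1\leq p+q-2+x$, i.e.\ $p+q\geq 1$, which holds since $q\geq 2$. In particular $t(x)\in[0,1)$, so Bernoulli's inequality gives
\[
1 - f(x) = 1 - (1-t(x))^d \leq d\, t(x) = \frac{d(1-p)(x-1)}{p+q-2+x}.
\]
It remains to verify $\frac{d(1-p)}{p+q-2+x}\leq 1$ for $x\geq 1$, which rearranges to $x \geq d(1-p) - p - q + 2 = d+2 - (d+1)p - q$. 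The hypothesis $p\geq 1 - \frac{q}{d+1}$ is precisely $(d+1)p\geq d+1-q$, making the right-hand side at most $1$, so the inequality holds whenever $x\geq 1$. Chaining the two bounds delivers $1 - f(x)\leq x-1$.

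The main (mild) obstacle is item (4): one must notice that the critical-regime assumption $p\geq p_c$ is exactly what makes the linear envelope $\frac{d(1-p)}{p+q-2+x}\leq 1$ valid on $[1,\infty)$, so that the elementary Bernoulli bound suffices without further case analysis. The other three items are immediate consequences of the computation $f(1)=1$ together with strict monotonicity.
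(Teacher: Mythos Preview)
Your proof is correct and, for items (1)--(3), essentially identical to the paper's: you compute $g'$ (equivalently $f'$), see it is negative, and then read off the range and the unique fixed point from $f(1)=1$ plus monotonicity.

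For item (4) your route differs slightly from the paper's. The paper sets $F(x)=f(x)+x-2$, notes $F(1)=0$, and argues $F'(x)=f'(x)+1\geq 0$ for $x\geq 1$ directly from the explicit derivative formula, which amounts to the bound $|f'(x)|\leq \frac{d(1-p)}{p+q-1}\leq 1$ on $[1,\infty)$. You instead write $f(x)=(1-t(x))^d$ and apply Bernoulli to get $1-f(x)\leq d\,t(x)$, then bound $d\,t(x)\leq x-1$. Both arguments are elementary and pivot on exactly the same inequality $\frac{d(1-p)}{p+q-1}\leq 1$ encoded by the hypothesis $p\geq p_c$; yours has the minor advantage of making explicit where the critical-regime assumption enters, while the paper's derivative bound is marginally shorter.
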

\begin{proof}
Property (1) can be deduced directly from the expression of $f$ and the fixed point is a corollary. To study the monotonicity of $f$, we look at its derivative 
\begin{align}\label{eq.Df}
f'(x) = \Ll(\frac{p x + (q-1)}{p + (q-2) +x}\Rr)^{d-1} \frac{d(p-1)(p+q-1)}{(p + (q-2) +x)^2} \leq 0,
\end{align}
because we have $p < 1$. For the contraction on one side, it suffices to study ${F(x) = f(x) + x - 2}$ and we know that $F(1) = 0$. Thus we look at its derivative. It is clear that 
\begin{align}\label{eq.Ff}
\forall x \geq 1, \qquad F'(x) = f'(x) + 1 \geq 0,    
\end{align}
which can be deduced directly from \eqref{eq.Df}.
\end{proof}

A direct corollary from Lemma~\ref{lem.Element} is that $r_{2k} \geq 1$ while $r_{2k+1} \leq 1$ for all $k \in \N$, so the ratio converges to the fixed point from two sides. This suggests us to use the two-step iteration $(f \circ f)$ to study the convergence rate for even terms and odd terms separately.

\begin{proof}[Proof of Proposition~\ref{prop.pure}]
Throughout the proof we admit $\lim_{n \to \infty} r_n = 1$, which will be proved in Corollary~\ref{cor.rn}. We divide the proof into three steps.

\emph{Step 1: identification of the critical exponent.} We explain at first the intuition to find the critical exponent. Suppose that $r_{2k} \simeq 1 + (C k)^{-\frac{1}{\alpha}}$, then we should have
\begin{align*}
(r_{2k} - 1)^{-\alpha} \simeq C k,    
\end{align*}
which implies
\begin{align*}
(r_{2k+2} - 1)^{-\alpha} - (r_{2k} - 1)^{-\alpha} \simeq C.    
\end{align*}
Thus, the problem can be reduced to find an exponent $\alpha > 0$ and a non-trivial  constant $C > 0$ in order to see the power law behavior
\begin{align}\label{eq.critical}
\lim_{x \searrow 1}((f \circ f)(x) - 1)^{-\alpha} - (x - 1)^{-\alpha} = C.    
\end{align}
Since $f$ is analytic, we do the Taylor expansion for $(f \circ f)$ around $1$ that 
\begin{align}
(f \circ f)(x) - 1 = \sum_{m=1}^{\infty} \frac{c_m}{m!}  (x-1)^m, \qquad c_m = \frac{\d^m }{\d x^m}_{\vert_{x=1}} (f \circ f)(x). 
\end{align}
At the critical phase, one can calculate several terms using \eqref{eq.Df} and the chain rule that 
\begin{align*}
c_1 = (f \circ f)'(1) = 1, \qquad
&c_2 = (f \circ f)''(1) = 0, \qquad
c_3 = (f \circ f)'''(1) = -\frac{(d^2-1)}{d^2}. 
\end{align*}
We explain more details. For $c_1$ and $c_2$, we only need to use the fact $f(1) = 1$ and $f'(1) = -1$ by \eqref{eq.f}, \eqref{eq.Df} and $p_c = 1 - \frac{q}{d+1}$. Then by the chain rule 
\begin{align*}
(f \circ f)'(1) &= f'(f(1))f'(1) = 1,  \\
(f \circ f)''(1) &= f''(1)(f'(1))^2 + f'(1)f''(1) = 0.
\end{align*}
Notice that $c_2$ is null, and since we assume $d \geq 2$, the term $c_3$ does not vanish. These terms will play an important role in the following calculation. 

We put the analytic expansion back in \eqref{eq.critical} with a change of variable $y := x-1$
\begin{align*}
\lim_{x \searrow 1}((f \circ f)(x) - 1)^{-\alpha} - (x - 1)^{-\alpha} = \lim_{y \searrow 0}\frac{y^{\alpha} - (y+\frac{c_3}{6} y^3 + o(y^3))^{\alpha}}{y^\alpha(y+\frac{c_3}{6} y^3 + o(y^3))^{\alpha}}.
\end{align*}
Here we use the notation $o(1)$ for $\lim_{y \to 0} o(1) = 0$. Then we observe that the numerator is of leading order $y^{2+\alpha}$
\begin{align*}
y^{\alpha} - \Ll(y+\frac{c_3}{6} y^3 + o(y^3)\Rr)^{\alpha} = y^{\alpha} - y^{\alpha}\Ll(1 +\frac{c_3}{6} y^2 + o(y^2)\Rr)^\alpha = -\frac{\alpha c_3 }{6} y^{2+\alpha} + o(y^{2+\alpha}),  
\end{align*}
while the denominator is of leading order $y^{2\alpha}$. Therefore,  $\alpha = 2$ is the only possible candidate to realize a non-trivial limit 
\begin{equation}\label{eq.ffLimit}
\begin{split}
\lim_{x \searrow 1}((f \circ f)(x) - 1)^{-2} - (x - 1)^{-2} &= \lim_{y \searrow 0}\frac{y^{2} - (y+\frac{c_3}{6} y^3 + o(y^3))^{2}}{y^2(y+\frac{c_3}{6} y^3 + o(y^3))^{2}}\\
&= \lim_{y \searrow 0}\frac{-\frac{c_3}{3} y^4 + o(y^4))}{(y^4+\frac{c_3}{3} y^6 + o(y^6))}\\
&= - \frac{c_3}{3}.    
\end{split}
\end{equation}

\medskip

\emph{Step 2: convergence rate of $(r_{n})_{n \in \N}$.}
By Proposition~\ref{prop.TwoStepfm}, we have $\lim_{k \to \infty}r_{2k} = 1$ and then use the telescope formula to obtain that 
\begin{align*}
\lim_{n \to \infty} \frac{1}{2n+2} (r_{2n+2}-1)^{-2}
&= \lim_{n \to \infty} \frac{1}{2n+2} \Ll(\sum_{k=1}^{n} \Ll((r_{2k+2}-1)^{-2} - (r_{2k}-1)^{-2}\Rr)\Rr) \\
&= \lim_{k \to \infty} \frac{1}{2}\Ll((r_{2k+2}-1)^{-2} - (r_{2k}-1)^{-2}\Rr)\\
&= - \frac{c_3}{6} = \frac{d^2-1}{6d^2}.
\end{align*}
From the first line to the second line, we use a classical exercise that the limit of average coincides with the limit of sequence. This gives us the ratio convergence rate for the even terms.

We then treat the odd terms with the mean value theorem: there exists  $z_{2k} \in (1, r_{2k})$ such that
\begin{align*}
r_{2k+1} - 1 = f(r_{2k}) - f(1) = f'(z_{2k})(r_{2k} - 1), \end{align*}
and $\lim_{k \to \infty}z_{2k} = 1$ by squeeze theorem. This implies $\lim_{k \to \infty} f'(z_{2k}) = -1$ and thus we establish that 
\begin{align}\label{eq.ratioRate1}
    \lim_{n \to \infty} \frac{1}{n}\vert  r_n - 1 \vert^{-2} = \frac{d^2-1}{6d^2}.
\end{align}

\medskip
\emph{Step 3: convergence rate of probability.} We now turn to the proof of \eqref{eq.pure}. We make use of the formula \eqref{eq.pureProba}, Step 2 (which gives $r_n \to 1$) and \eqref{eq.ratioRate1}
\begin{align*}
\lim_{n \to \infty} \frac{1}{n} \Ll(  \Ll \vert \mu^{2}_{n,p_c,q}[\sigma_* = 2] - \frac{1}{q} \Rr \vert \Rr)^{-2} &= \lim_{n \to \infty} \frac{1}{n} \Ll(  \Ll \vert \frac{r_n}{r_n + q - 1} - \frac{1}{q} \Rr \vert \Rr)^{-2}\\
&= \lim_{n \to \infty} \frac{1}{n} \Ll(  \Ll \vert \frac{(q-1)\vert r_n - 1 \vert}{q (r_n + q - 1)}  \Rr \vert \Rr)^{-2}\\
&= \Ll(\frac{q^2}{q-1}\Rr)^2  \lim_{n \to \infty}\frac{1}{n}\vert r_n - 1 \vert^{-2} \\
&= \frac{d^2-1}{6d^2} \Ll(\frac{q^2}{q-1}\Rr)^2 . 
\end{align*}
This is the desired result.
\end{proof}

\section{Upper bound by expansion near fixed point}
The pure boundary condition analyzed in the previous section is only a very special case. In this section we show that for evey boundary condition,  the iteration function $(f \circ f)$, where $f$ is defined in \eqref{eq.f} captures the rate of convergence near the fixed point. The main result of this section is the following proposition.
\begin{proposition}\label{prop.TwoStep}
For any integers $d,n \geq 1, q \geq 2$ and $p  \in [1 - \frac{q}{d+1}, 1) \cap (0,1)$, there exists a constant $\epsilon(d,q,p) \in (0, \infty)$, such that for every $r^*_n \in [1, 1 + \epsilon)$, we have that ${r^*_{n+2} \leq (f \circ f)(r^*_n)}$.
\end{proposition}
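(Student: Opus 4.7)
The plan is to apply Lemma~\ref{lem.recurrence} twice to peel off two layers of the tree, reducing the bound to a constrained perturbation problem near the uniform fixed point of $f\circ f$.

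First I would iterate Lemma~\ref{lem.recurrence} and express $r_{n+2}(\xi)$ in terms of the grandchild marginals $\nu^{uv}_i := \mu^{\xi^{uv}}_{n,p,q}[\sigma_{uv}=i]$. Setting $\tilde\nu^{uv}_i := 1-(1-p)\nu^{uv}_i$ and $P^u_i := \prod_{v=1}^d \tilde\nu^{uv}_i$, a direct computation (using $\mu^{\xi^u}_{n+1,p,q}[\sigma_u=i] = P^u_i/\sum_j P^u_j$) yields
\begin{equation*}
r_{n+2}(\xi) \;=\; \prod_{u=1}^d \frac{p P^u_2 + \sum_{j\neq 2}P^u_j}{p P^u_1 + \sum_{j\neq 1}P^u_j}.
\end{equation*}
By the color-permutation symmetry of the Potts Hamiltonian, the definition of $r^*_n$ forces the pointwise bound $\nu^{uv}_i/\nu^{uv}_j \le r^*_n$ for every $i,j,u,v$, so each grandchild marginal lies within $O(\delta)$ of $1/q$, where $\delta := r^*_n - 1$. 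When $\xi$ is the pure boundary $\equiv 2$ (i.e., $\xi^{uv}\equiv 2$ for all grandchildren), the expression collapses to $(f\circ f)(r_n)$, both as a sanity check and identifying the extremal profile against which I must compete.

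Next, I would parametrize the grandchild marginals via $\nu^{uv}_i = 1/q + \eta^{uv}_i$ with $\sum_i \eta^{uv}_i = 0$ and $\max_{i,j}|\eta^{uv}_i - \eta^{uv}_j| \le \delta/q + O(\delta^2)$, and Taylor-expand the right-hand side above in the $\eta$-variables. The target to beat is
\[
(f\circ f)(r^*_n) \;=\; 1 + \delta - \tfrac{d^2-1}{6 d^2}\delta^3 + O(\delta^4),
\]
extracted from the computation in the proof of Proposition~\ref{prop.pure}, using $(f\circ f)'(1)=1$, $(f\circ f)''(1)=0$, and $(f\circ f)'''(1)=-(d^2-1)/d^2$ at $p = p_c$. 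Since the first two derivatives degenerate at the critical temperature, the inequality $r_{n+2}(\xi)\le (f\circ f)(r^*_n)$ must be verified at cubic order in $\delta$: a preliminary expansion shows that the linear-in-$\delta$ contribution of $r_{n+2}(\xi)-1$ coincides with $\delta$ for every admissible $\xi$ and the quadratic contribution vanishes identically, so everything hinges on the cubic term.

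The hard part will be this cubic-order comparison: a priori, an asymmetric grandchild configuration could generate a positive cubic correction that defeats the pure-boundary benchmark $-\tfrac{d^2-1}{6d^2}\delta^3$. I would establish, via a convexity or symmetrization argument on the admissible polytope $\{(\eta^{uv}_i)\}$, that the cubic coefficient of $r_{n+2}(\xi)-1$ is always dominated by that of $(f\circ f)(r^*_n)-1$, so that the asymmetric boundaries can only \emph{strengthen} the contraction. The residual $O(\delta^4)$ errors, uniform in $\xi$, are then absorbed by choosing $\epsilon$ sufficiently small. Taking the supremum over $\xi$ in the resulting pointwise inequality $r_{n+2}(\xi)\le (f\circ f)(r^*_n)$ finally yields $r^*_{n+2}\le (f\circ f)(r^*_n)$, as claimed in Proposition~\ref{prop.TwoStep}.
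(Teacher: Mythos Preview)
Your outline has the right shape --- iterate the recursion twice and compare the general boundary against the pure-boundary benchmark by Taylor expansion --- but there is a genuine gap, and two of your intermediate claims are incorrect as stated. The assertion that ``the linear-in-$\delta$ contribution of $r_{n+2}(\xi)-1$ coincides with $\delta$ for every admissible $\xi$'' is false: the grandchild profile $\eta\equiv 0$ is admissible and gives $r_{n+2}(\xi)=1$. Different $\xi$ produce different first-order contributions; what you actually need is that the \emph{maximum} first-order contribution equals $(A/B)^2\delta$, and you then have to identify the set of first-order maximisers before proceeding to the next order. Similarly, the quadratic contribution does not vanish identically; it depends on the profile. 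Your proposed cubic-order comparison over the continuous polytope, resolved by an unspecified ``convexity or symmetrization argument'', is where the real content lies, and you have not supplied it.

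The paper avoids this difficulty by a reduction that you are missing: it makes the optimisation \emph{finite} before expanding. First, by AM--GM, $\prod_{v} U^v \le \max_v (U^v)^d$, and the right-hand side is realisable by copying the best subtree to all depth-one children; this lets one assume the boundary is periodic across the $d$ first-level subtrees. Second, two color-swapping arguments show the maximiser satisfies $x^u_1 = \min_k x^u_k$, so one may normalise $x^u_1=1$ and $x^u_k\in[1,r^*_n]$. Third --- and this is the key step your continuous parametrisation misses --- in each coordinate $x^u_k$ the resulting function $h$ is fractional-linear, so its extremum on $[1,r^*_n]$ is attained at an endpoint. This collapses the search to the finite set $\{x^u_k\in\{1,r^*_n\}\}$. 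On that finite set, expanding $h$ in $(r^*_n-1)$ only to \emph{second} order suffices: the first-order term forces $x^u_2=r^*_n$ for all $u$, and the second-order term then forces $x^u_k=1$ for $k\ge 3$, uniquely picking out the pure-boundary profile where $h=(f\circ f)(r^*_n)$ exactly. No third-order analysis is needed.
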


\subsection{Two-step iteration}
We may obtain a two-step iteration by applying Lemma~\ref{lem.recurrence} twice. However, two step iteration in the general boundary condition is much more complicated than the one studied in the last section, so we will make a reduction to the extremal boundary conditions, in the sense of the next proposition.  

\begin{proposition}\label{prop.TwoStepBound}
For any integers $d,n \geq 1, q \geq 2$ and $p  \in [1 - \frac{q}{d+1}, 1) \cap (0,1)$, the ratio function satisfies the following bound for the two-step iteration
\begin{align}\label{eq.TwoStepBound}
    r^*_{n+2} \leq \max_{\A(r^*_n)} h(\x).
\end{align}
Here $\x = \{x^u_k\}_{u \in [d], k \in [q] }$ represents a vector, and $h$ is a function defined as 
\begin{equation}\label{eq.h}
\begin{split}
h(\x) := U^d,    & \qquad U := \frac{\sum_{j=1}^q V_j p^{\delta_{2}(j)}}{\sum_{j=1}^q V_j p^{\delta_{1}(j)}},\\
V_j := \prod_{u=1}^d V^u_j, & \qquad V^u_j := \frac{\sum_{k=1}^q x^{u}_k p^{\delta_{j}(k)}}{\sum_{k=1}^q x^{u}_k p^{\delta_{1}(k)}},
\end{split}
\end{equation}
and  $\A(r)$ as a reduced admissible domain for $r \geq 1$ 
\begin{align}\label{eq.Ar}
\A(r) := \Bigg\{\x : x^u_1 = 1 \text{ and } x^u_k \in \{1, r\},  \forall k \in [q] \setminus \{1\}, \text{ and } \forall u \in [d]\Bigg\}.
\end{align}
\end{proposition}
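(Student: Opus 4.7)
The plan is to apply the one-step recurrence Lemma~\ref{lem.recurrence} twice, match the resulting formula to the structure of $U$ in \eqref{eq.h}, and then reduce the optimization to the discrete admissible domain $\A(r^*_n)$.

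First, applying Lemma~\ref{lem.recurrence} at the root of $\T^d_{n+2}$ yields
\begin{align*}
r_{n+2}(\xi) = \prod_{u=1}^{d} \frac{(p-1)\mu^{\xi^u}_{n+1,p,q}[\sigma_u = 2] + 1}{(p-1)\mu^{\xi^u}_{n+1,p,q}[\sigma_u = 1] + 1},
\end{align*}
where $u$ indexes the $d$ children of the root. A second application expresses each $\mu^{\xi^u}_{n+1,p,q}[\sigma_u = j]$ in terms of the grandchild marginals $y^{uw}_k := \mu^{\xi^{uw}}_{n,p,q}[\sigma_{uw}=k]$ with $w \in [d]$. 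Normalizing by $\tilde{x}^{uw}_k := y^{uw}_k/y^{uw}_1$ (so $\tilde{x}^{uw}_1 = 1$) and using the identity
\begin{align*}
(p-1)y^{uw}_k + 1 = y^{uw}_1 \sum_{l=1}^q \tilde{x}^{uw}_l\, p^{\delta_k(l)},
\end{align*}
the common factors $y^{uw}_1$ cancel between numerators and denominators. After this algebra, $r_{n+2}(\xi)$ becomes a product over $u \in [d]$ of quantities structurally identical to $U$ in \eqref{eq.h}, with the abstract variables $x^w_k$ replaced by the concrete grandchild ratios $\tilde{x}^{uw}_k$.

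The $U^d$ form in $h$ then comes from the independence of the subtrees rooted at the $d$ children of the root: since each factor in the product is positive and each $\xi^u$ can be optimized independently,
\begin{align*}
r^*_{n+2} = \prod_{u=1}^d \max_{\xi^u} V^u_2(\xi^u) = (M^*)^d,
\end{align*}
with $M^* := \max_\eta V^1_2(\eta)$ common to every $u$ by permutation symmetry among children.

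The remaining step is to verify $M^* \leq \max_{\x \in \A(r^*_n)} U(\x)$. By the definition of $r^*_n$ combined with the color permutation symmetry of the Gibbs measure, every realizable grandchild marginal satisfies $(\max_l \mu_l)/(\min_l \mu_l) \leq r^*_n$; equivalently, $\tilde{x}^{1w}_l \in [1/r^*_n, r^*_n]$ with the joint constraint $\max_l \tilde{x}^{1w}_l / \min_l \tilde{x}^{1w}_l \leq r^*_n$. The reduction to $\A(r^*_n)$ then splits into two substeps: (a) apply a color permutation within each grandchild subtree to make color $1$ the minimum-probability color there, so that $\tilde{x}^{1w}_l \in [1, r^*_n]$ for every $l$; and (b) use coordinate-wise monotonicity of $U$ on the resulting hypercube, obtainable by direct computation of $\partial U/\partial \tilde{x}^{1w}_l$ (each slice being a fractional linear function), to conclude that the maximum is attained at a vertex with $\tilde{x}^{1w}_l \in \{1, r^*_n\}$, which is precisely $\A(r^*_n)$.

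Substep (a) is the principal obstacle. A color permutation at a single grandchild simultaneously permutes all the $V^{uw}_i$'s, so one must verify this operation does not decrease $U$. The verification should exploit the symmetry of $U = \frac{\sum_j V_j p^{\delta_2(j)}}{\sum_j V_j p^{\delta_1(j)}}$ in the colors $\{3,\dots,q\}$, together with careful tracking of how the two distinguished colors $1$ and $2$ enter the numerator and denominator; it will likely require a case-by-case comparison. Once substep (a) is in hand, substep (b) and the stated bound follow from elementary convex analysis.
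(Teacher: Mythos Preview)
Your strategy matches the paper's: iterate the recursion twice, factor over the $d$ children (your $(M^*)^d$ equality; the paper's condition $\mathbf{E_3}$ via AM--GM), then force the grandchild ratios into $[1,r^*_n]$ by color permutations (your substep (a); the paper's $\mathbf{E_2}$), and finally push to vertices using the fractional-linear structure in each coordinate (your substep (b)). The factorization and the vertex reduction are correct and essentially identical to the paper.

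The gap is exactly where you flag it, substep (a), and its resolution is cleaner than a case analysis. The paper inserts a preliminary reduction you do not mention: \emph{before} permuting locally at each grandchild, first apply a \emph{global} color permutation within each child subtree so that, at the child vertex, color $1$ carries the largest weight and color $2$ the smallest (the paper's condition $\mathbf{E_1}$). Writing the child-level factor as
\[
\hat U^v \;=\; 1 + \frac{(1-p)\bigl(\hat V^v_1 - \hat V^v_2\bigr)}{p\,\hat V^v_1 + \sum_{j\ge 2}\hat V^v_j},
\]
one sees that this global swap can only increase $\hat U^v$, and more importantly it secures the constraint $\hat V^v_j \le \hat V^v_1 = 1$ for every $j$. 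With that in hand the local swap is routine: placing the minimum-weight color at slot $1$ in a single grandchild $vu$ decreases every $\hat V^{vu}_j$ for $j\ne 1$, hence every $\hat V^v_j$ for $j\ne 1$; the displayed formula then shows $\hat U^v$ can only grow, since the nonnegative numerator $\hat V^v_1 - \hat V^v_2$ increases while the positive denominator shrinks. Without first securing $\hat V^v_2 \le 1$, the sign of $\partial \hat U^v/\partial \hat V^v_j$ for $j\ge 3$ is genuinely uncontrolled (it equals the sign of $\hat V^v_2 - 1$), so your proposed case-by-case route has no clear terminus. A minor point: your displayed equality for $r^*_{n+2}$ labels the child-level factor $V^u_2$, which clashes with the grandchild-level meaning of $V^u_j$ in \eqref{eq.h}.
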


This proposition has already been shown in \cite[Lemma 2.6]{GGY2018}. We reformulate its proof here for the self-completeness of the paper.
\begin{proof}[Proof of Proposition~\ref{prop.TwoStepBound}] \emph{Step 1: general two-step iteration formula.} Using the recurrence \eqref{eq.Recurrence0} formula, we have
\begin{align*}
r_{n+2}(\xi) &=  \frac{W^\xi_{n+2, p, q}(\sigma_* = 2)}{W^\xi_{n+2, p, q }(\sigma_* = 1)}\\
&=  \prod_{v=1}^d \Ll(\frac{\sum_{j=1}^q W^{\xi^v}_{n+1, p, q}(\sigma_v = j) p^{\delta_{2}(j)}}{\sum_{j=1}^q W^{\xi^v}_{n+1, p, q}(\sigma_v = j) p^{\delta_{1}(j)}}\Rr) \\
&=  \prod_{v=1}^d \Ll(\frac{\sum_{j=1}^q W^{\xi^v}_{n+1, p, q}(\sigma_v = j)/W^{\xi^v}_{n+1, p, q}(\sigma_v = 1) p^{\delta_{2}(j)}}{\sum_{j=1}^q W^{\xi^v}_{n+1, p, q}(\sigma_v = j)/W^{\xi^v}_{n+1, p, q}(\sigma_v = 1) p^{\delta_{1}(j)}}\Rr).
\end{align*}
We can apply once again the formula \eqref{eq.Recurrence0} for $W^{\xi^v}_{n+1, p, q}(\sigma_v = j)/W^{\xi^v}_{n+1, p, q}(\sigma_v = 1)$ in order to get the general two-step iteration. We define a function $\hat{h}$
\begin{equation}\label{eq.hNew}
\begin{split}
\hat{h}(\{\hat{x}^{vu}_k\}_{u,v \in [d], k \in [q]}) := \prod_{v=1}^d \hat{U}^v,    &\qquad
\hat{U}^v := \frac{\sum_{j=1}^q \hat{V}^v_j p^{\delta_{2}(j)}}{\sum_{j=1}^q \hat{V}^v_j p^{\delta_{1}(j)}}, \\
\hat{V}^v_j := \prod_{u=1}^d \hat{V}^{vu}_j, & \qquad \hat{V}^{vu}_j := \frac{\sum_{k=1}^q \hat{x}^{vu}_k p^{\delta_{j}(k)}}{\sum_{k=1}^q \hat{x}^{vu}_k p^{\delta_{1}(k)}},
\end{split}
\end{equation}
and then by identifying that 
\begin{equation}\label{eq.TwoStepRole}
\begin{split}
{\hat{U}^v = \frac{\sum_{j=1}^q W^{\xi^v}_{n+1, p, q}(\sigma_v = j) p^{\delta_{2}(j)}}{\sum_{j=1}^q W^{\xi^v}_{n+1, p, q}(\sigma_v = j) p^{\delta_{1}(j)}}}, \qquad & 
\hat{V}^{v}_j = \frac{W^{\xi^{v}}_{n+1, p, q}(\sigma_{v} = j)}{W^{\xi^{v}}_{n+1, p, q}(\sigma_{v} = 1)},\\
{\hat{V}^{vu}_j =  \frac{\sum_{k=1}^q W^{\xi^{vu}}_{n, p, q}(\sigma_{vu} = k) p^{\delta_{j}(k)}}{\sum_{k=1}^q W^{\xi^{vu}}_{n, p, q}(\sigma_{vu} = k) p^{\delta_{1}(k)}}}, \qquad & 
\hat{x}^{vu}_k = \frac{W^{\xi^{vu}}_{n, p, q}(\sigma_{vu} = k)}{W^{\xi^{vu}}_{n, p, q}(\sigma_{vu} = 1)},
\end{split}    
\end{equation}
we obtain that 
\begin{align}\label{eq.TwoStepGeneral}
 r_{n+2}(\xi) =  \hat{h}\Ll(\Ll\{ \frac{W^{\xi^{vu}}_{n, p, q}(\sigma_{vu} = k)}{W^{\xi^{vu}}_{n, p, q}(\sigma_{vu} = 1)}\Rr\}_{u,v \in [d], k \in [q]}\Rr).
\end{align}

The expression of $\hat{h}$ is quite close to that of $h$ defined in \eqref{eq.h}. In the following steps, we see how to transform $\hat{h}$ to $h$ and restrict to the admissible domain. 

\medskip

\emph{Step 2: characterizations of $\max_{\xi}r_{n+2}(\xi)$.} In this part, we give some characterizations of the boundary condition that realizes $\max_{\xi}r_{n+2}(\xi)$. These characterizations are not sufficient to identify the maximiser, but may reduce the maximiser to the admissible domain \eqref{eq.Ar}. We focus on at first $\hat{U}^v$ in \eqref{eq.hNew} and \eqref{eq.TwoStepRole} that
\begin{align}\label{eq.UNew}
\hat{U}^v  =  \frac{\sum_{j=1}^q W^{\xi^v}_{n+1, p, q}(\sigma_v = j) + (p-1) W^{\xi^v}_{n+1, p, q}(\sigma_v = 2) }{\sum_{j=1}^q W^{\xi^v}_{n+1, p, q}(\sigma_v = j) + (p-1) W^{\xi^v}_{n+1, p, q}(\sigma_v = 1) }.
\end{align}
As we are interested in the $\max_{\xi}r_{n+2}(\xi)$, without loss of generality, we can suppose that for the maximiser of $\max_{\xi}r_{n+2}(\xi)$
\begin{multline}\label{eq.Condition1}
\mathbf{E_1}:= \Ll\{\xi : \partial \T^{d}_{n+2} \to [q] \, \Big\vert \, \forall v \in [d], \quad    W^{\xi^v}_{n+1, p, q}(\sigma_v = 1) = \max_{j \in [q]} W^{\xi^v}_{n+1, p, q}(\sigma_v = j), \Rr.\\
\Ll.W^{\xi^v}_{n+1, p, q}(\sigma_v = 2) = \min_{j \in [q]} W^{\xi^v}_{n+1, p, q}(\sigma_v = j)\Rr\}.
\end{multline}
Otherwise, for example suppose that $j \in [q] \setminus \{1\}$ is the color to achieve the maximum, one can exchange the color between $j$ and $1$ in $\xi^v$. Then the value $W^{\xi^v}_{n+1, p, q}(\sigma_v = 1)$ and $W^{\xi^v}_{n+1, p, q}(\sigma_v = j)$ exchange, and $\hat{U}^v$ becomes larger.  

We now go one step further and claim that for the maximiser of $\max_{\xi}r_{n+2}(\xi)$
\begin{align}\label{eq.Condition2}
\mathbf{E_2}:= \Ll\{\xi : \partial \T^{d}_{n+2} \to [q] \, \Big\vert  \, \forall v,u \in [d], \quad    W^{\xi^{vu}}_{n, p, q}(\sigma_{vu} = 1) = \min_{k \in [q]} W^{\xi^{vu}}_{n, p, q}(\sigma_{vu} = k)\Rr\}.     
\end{align}
The argument is as follows. Suppose that it is not true, then there exist $v,u \in[d]$ and  $k \in [q] \setminus \{1\}$, such that $W^{\xi^{vu}}_{n, p, q}(\sigma_{vu} = k)$ achieves the minimum weight on the subtree indexed by $vu$. Then by a similar formula as \eqref{eq.UNew}
\begin{align}\label{eq.VNew}
\hat{V}^{vu}_j  =  \frac{\sum_{k=1}^q W^{\xi^{vu}}_{n, p, q}(\sigma_{vu} = k) + (p-1) W^{\xi^{vu}}_{n, p, q}(\sigma_{vu} = 2) }{\sum_{k=1}^q W^{\xi^{vu}}_{n, p, q}(\sigma_{vu} = k) + (p-1) W^{\xi^{vu}}_{n, p, q}(\sigma_{vu} = 1) },
\end{align}
switching the color between $k$ and $1$ on the boundary condition $\xi^{vu}$ will decrease the value of $\hat{V}^{vu}_j$ for $j \in [q] \setminus \{1\}$, so as that of $\hat{V}^{v}_j$. Viewing the definition of $\hat{V}^{v}_j$ in \eqref{eq.TwoStepRole}, the condition \eqref{eq.Condition1} implies that 
\begin{equation}\label{eq.Condition11}
\begin{split}
\forall v \in [d], j \in [q], \quad    \hat{V}^{v}_j \leq 1 = \hat{V}^{v}_1.
\end{split}
\end{equation}
We then develop $\hat{U}^v$ in function of $\hat{V}^{v}_j$ like in \eqref{eq.h}
\begin{align*}
\hat{U}^{v} =  1 + \frac{(1-p)(\hat{V}^{v}_1 - \hat{V}^{v}_2)}{\sum_{j=1}^q \hat{V}^{v}_j + (p-1)\hat{V}^{v}_1}.    
\end{align*}
Then the decrement of $\hat{V}^{v}_j$ for $j \in [q] \setminus \{1\}$ implies the increment of $\hat{U}^{v}$.

We now propose a third characterization for the maximiser of $\max_{\xi}r_{n+2}(\xi)$
\begin{align}\label{eq.Condition3}
\mathbf{E_3}:= \Ll\{\xi : \partial \T^{d}_{n+2} \to [q] \, \Big\vert  \quad\forall v_1, v_2 \in [d], \quad \xi^{v_1} = \xi^{v_2}\Rr\}. 
\end{align}    
That is to say, the boundary condition for the maximiser is periodic. This comes from the AM--GM inequality, 
\begin{align*}
    \hat{h}\Ll(\Ll\{ \frac{W^{\xi^{vu}}_{n, p, q}(\sigma_{vu} = k)}{W^{\xi^{vu}}_{n, p, q}(\sigma_{vu} = 1)}\Rr\}_{u,v \in [d], k \in [q]}\Rr) = \prod_{v=1}^d \hat{U}^v \leq \frac{1}{d} \sum_{v=1}^d (\hat{U}^v)^d.
\end{align*}
Moreover, the upper bound on {\rhs} is realizable: for a given boundary condition $\xi$, it suffice to take the boundary condition $\xi^v$ of subtrees to maximize $(\hat{U}^v)^d$, and then copy it to other subtrees. Combining \eqref{eq.Condition1}, \eqref{eq.Condition2}, \eqref{eq.Condition3}, we obtain that 

\begin{align}\label{eq.TwoStepChar}
 r^*_{n+2} =  \max_{\xi \in \mathbf{E_1} \cap \mathbf{E_2} \cap \mathbf{E_3} }\hat{h}\Ll(\Ll\{ \frac{W^{\xi^{vu}}_{n, p, q}(\sigma_{vu} = k)}{W^{\xi^{vu}}_{n, p, q}(\sigma_{vu} = 1)}\Rr\}_{u,v \in [d], k \in [q]}\Rr).    
\end{align}

\emph{Step 3: reduction of $\hat{h}$.}
Then the expression of $\hat{h}$ becomes that of $h$ once we skip the index of $v$, and then use \eqref{eq.TwoStepChar}, \eqref{eq.TwoStepRole} to give an upper bound  
\begin{align}\label{eq.TwoStepBound2}
    r^*_{n+2} \leq  \max_{\forall u \in [d], k \in [q], 1 \leq x^u_k \leq r^*_{n}, x^u_1 = 1}h(\x).
\end{align}
Moreover, fixing the other variables, the function $x^u_k \mapsto h(\x)$ is of type $\frac{a x^u_k + b}{c x^u_k +d}$ with $a,b,c,d \in \R$. For this mapping, the extreme value is attained at the endpoint of the interval. Therefore, we obtain the desired result \eqref{eq.TwoStepBound}.
\end{proof}

\subsection{Expansion of the two-step iteration near the fixed point}
In this part, we study the property of the function $h(\x)$ defined in \eqref{eq.h}. Although the optimization problem $\max_{\x \in \A(r)} h(\x)$ is quite difficult for a general $r$, its behavior near the fixed point $1$ is closely related to the function $f$ defined in \eqref{eq.f}. 
\begin{proposition}\label{prop.expansion}
For any integers $d,n \geq 1, q \geq 2$ and $p  \in [1 - \frac{q}{d+1}, 1) \cap (0,1)$, there exists a constant $\epsilon(d,q,p) \in (0, \infty)$, such that for any  $r \in (1,1+\epsilon)$, we have
\begin{align*}
 \max_{\x \in \A(r)} h(\x) = (f \circ f)(r).    
\end{align*}
Moreover, the maximiser is attained when $x^u_k = 1 + (r-1) \delta_{k,2}$ for all $u \in [d]$.
\end{proposition}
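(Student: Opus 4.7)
My approach is to parametrize the finite set $\A(r)$ by binary variables $\epsilon^u_k \in \{0,1\}$ via $x^u_k = 1 + (r-1)\epsilon^u_k$ for $k \geq 2$ (with $x^u_1 = 1$), and to Taylor-expand $h(\x)$ in the small parameter $t := r - 1$. A direct substitution at the candidate $\epsilon^u_k = \delta_{k,2}$ yields $V^u_2 = (pr + q - 1)/(r + p + q - 2) = f(r)^{1/d}$ and $V^u_j = 1$ for $j \neq 2$, so $V_2 = f(r)$, $V_j = 1$ otherwise, and
\[
U = \frac{p f(r) + q - 1}{f(r) + p + q - 2} = f(f(r))^{1/d}, \qquad h = U^d = (f \circ f)(r).
\]
Hence it suffices to prove that this configuration is the unique maximiser of $h$ on $\A(r)$ for $t > 0$ small.

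Setting $b := (1-p)/(p+q-1) > 0$ and $M_j := \sum_u \epsilon^u_j$, the expansion $\alpha^u = -bt + O(t^2)$ gives $V^u_j = 1 + \alpha^u \epsilon^u_j$ and $V_j = 1 - bt M_j + O(t^2)$. Substituting into the identity $U - 1 = (1-p)(1 - V_2)/(p + V_2 + \sum_{j \geq 3} V_j)$ produces
\[
h(\x) - 1 = \frac{d^2 (1-p)^2}{(p+q-1)^2}\, M_2 \, t + O(t^2).
\]
Since the leading coefficient is strictly increasing in $M_2 \in \{0,1,\ldots,d\}$, any maximiser for small $t$ must satisfy $M_2 = d$, i.e.\ $\epsilon^u_2 = 1$ for every $u \in [d]$.

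The main step is then to show that within the slice $\{M_2 = d\}$ the optimal choice is $\epsilon^u_k = 0$ for all $k \geq 3$. My plan is a single-flip exchange argument: starting from the candidate and flipping one $\epsilon^{u_0}_{k_0}:0 \to 1$ with $k_0 \geq 3$, the non-vanishing variations are $\Delta V_2 = (\alpha_2 - \alpha_1)(1+\alpha_1)^{d-1} = O(t^2)$ and $\Delta V_{k_0} = \alpha_2 = O(t)$, where $\alpha_S := t(p-1)/(p+q-1+tS)$. Substituting into the quotient form of $U$ and expanding, the leading $O(t)$ contribution cancels and one obtains
\[
\Delta U = \frac{(1-p)\, b\,(bd - 1)}{(p+q-1)^2}\, t^2 + O(t^3),
\]
which is strictly negative for $p > p_c$ because $bd - 1 = d(1-p)/(p+q-1) - 1 < 0$ in that regime. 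A parallel computation starting from any configuration with $m := \sum_u \sum_{k \geq 3}\epsilon^u_k > 0$ shows that the reverse flip $1 \to 0$ strictly increases $h$ at the same order, completing the argument for $p > p_c$ by induction on $m$.

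The main obstacle is the critical case $p = p_c$, where $bd = 1$ and the $t^2$ coefficient of $\Delta U$ vanishes for every flip---consistently with $(f \circ f)''(1) = 0$ established in Proposition~\ref{prop.pure}. To handle it, the expansion must be pushed to $O(t^3)$; the negativity of the cubic coefficient $(f \circ f)'''(1) = -(d^2-1)/d^2$, also from Proposition~\ref{prop.pure}, then supplies the strict inequality required for each flip. This third-order expansion is the most delicate computation, but once carried out, the constant $\epsilon(d, q, p) > 0$ is produced by restricting $t$ so that the Taylor remainders are dominated by the leading non-trivial correction, concluding the proof.
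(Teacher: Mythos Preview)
Your expansion to first and second order is correct, and you are right that at $p=p_c$ the $t^2$ coefficient of $\Delta h$ vanishes for every single flip within the slice $M_2=d$. The paper reaches the opposite conclusion via a direct Hessian computation, but its displayed value $\mathbf{III}(\1)=-2d^{-1}B^{-1}(A/B)^2$ in the diagonal case $u_1=u_2$ is in error: the factor $2$ should be $\delta_{2,k_1}+\delta_{2,k_2}$, and with this correction the variable part of $\tfrac{\partial^2 h}{\partial r^2}(\1)$ on the slice $\{\theta^u_2\equiv1\}$ becomes $2d^{-1}B^{-1}(A/B)^2\bigl((A/B)-1\bigr)\sum_u s_u$ (where $s_u=\sum_{k\ge2}\theta^u_k$), which indeed vanishes at $A/B=1$.

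The genuine gap is in your third-order plan. You invoke $(f\circ f)'''(1)<0$, but that is the cubic coefficient of the \emph{candidate} configuration; what is needed is the sign of the \emph{difference} $\Delta h$ under a flip, and this turns out to be positive at criticality. Concretely, for $d=q=3$, $p=p_c=1/4$, flipping a single $\epsilon^{1}_{3}$ from $0$ to $1$ yields $h_{\mathrm{flip}}-1=t-\tfrac{8}{81}t^3+O(t^4)$ against $h_{\mathrm{cand}}-1=t-\tfrac{4}{27}t^3+O(t^4)$, so $\Delta h=+\tfrac{4}{81}t^3+O(t^4)>0$; numerically, at $r=1.2$ one finds $h_{\mathrm{flip}}\approx1.1993>1.1989\approx h_{\mathrm{cand}}$. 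Hence $\max_{\x\in\A(r)}h(\x)>(f\circ f)(r)$ for all small $r-1>0$ at $p=p_c$, so the proposition as stated fails there; neither your proposed completion nor the paper's second-order argument can be repaired without a new idea. For the strict subcritical range $p>p_c$ your second-order flip computation is valid, and the monotonicity of $\bigl((A/B)-1\bigr)S$ in $S=\sum_u s_u$ upgrades the single-flip comparison to a global one over~$\A(r)$.
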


\begin{proof}
Throughout the proof, for $\x \in \A(r)$, we set 
\begin{align*}
    x^u_k = 1 + (r-1)\theta^u_k, \qquad \theta^u_k \in \{0,1\},
\end{align*}
with $\theta^u_k$ to be fixed later. Then the function $h(\x)$ can be seen as a function with respect to $r$. By the Taylor formula, for $x \in \A(r)$ and $r$ close to $1$
\begin{align*}
h(\x) = 1 + \frac{\partial h}{\partial r}(\1) (r-1) + O((r-1)^2).
\end{align*}
Here $\x = \1$ means that for all $u \in [d], k \in [q], x^u_k = 1$. Since the first order dominates, the maximiser should attain the largest possible $\frac{\partial h}{\partial r}(\1)$. 
We calculate the derivative at first using the chain rule:
\begin{align*}
\frac{\partial h}{\partial r} &= \sum_{u=1}^d \sum_{j,k=2}^q d(U)^{d-1}\frac{\partial U}{\partial V_j} \frac{\partial V_j}{\partial x^u_k} \frac{\partial x^u_k}{\partial r} \\
&= \sum_{u=1}^d \sum_{j,k=2}^q d(U)^{d-1}\frac{\partial U}{\partial V_j} \frac{\partial V_j}{\partial x^u_k} \theta^u_k.
\end{align*}
The derivative at fixed point $\x = \1$ is 
\begin{align*}
\frac{\partial U}{\partial V_j}(\1)  = -d^{-1}(A/B)\delta_{2,j}, \qquad  \frac{\partial V_j}{\partial x^u_k}(\1) =  -d^{-1}(A/B)\delta_{j,k}, 
\end{align*}
where we use the following notation for the simplification 
\begin{align}\label{eq.AB}
A:= d(1-p), \qquad B:= p+q-1,
\end{align}
and the condition  $p  \in [1 - \frac{q}{d+1}, 1) \cap (0,1)$ implies that 
\begin{align}\label{eq.AB2}
    A \leq B.
\end{align}
Therefore, we have 
\begin{align*}
\frac{\partial h}{\partial r}(\1) &= d^{-2} (A/B)^2  \sum_{u=1}^d \sum_{j,k=2}^q \theta^u_k \delta_{2,j} \delta_{j,k}  \\
&=  d^{-2} (A/B)^2  \sum_{u=1}^d \sum_{k=2}^q \theta^u_k \delta_{2,k}  \\
& \leq d^{-1} (A/B)^2.
\end{align*}
The equality is attained when $\theta^u_2 = 1$ for all $u \in [d]$. This implies that for all $u \in [d]$, $x^u_2 = r$.

\medskip

For other variables, we need more information, so we calculate the second derivative. Similarly, we have
\begin{align*}
h(\x) = 1 + \frac{\partial h}{\partial r}(\1) (r-1) + \frac{1}{2} \frac{\partial^2 h}{\partial r^2}(\1) (r-1)^2 +  O((r-1)^3),    
\end{align*}
and the maximiser also needs to maximize the second derivative. We calculate it
\begin{align*}
\frac{\partial^2 h}{\partial r^2} = \sum_{u_1,u_2=1}^d \sum_{k_1,k_2 = 2}^q \frac{\partial^2 h}{\partial x^{u_1}_{k_1} \partial x^{u_2}_{k_2}} \theta^{u_1}_{k_1} \theta^{u_2}_{k_2}.    
\end{align*}
The derivative $\frac{\partial^2 h}{\partial x^{u_1}_{k_1} \partial x^{u_2}_{k_2}}$ is 
\begin{align*}
\frac{\partial^2 h}{\partial x^{u_1}_{k_1} \partial x^{u_2}_{k_2}} &= \mathbf{I} + \mathbf{II} + \mathbf{III}, \\
\mathbf{I} &= d(d-1) (U)^{d-2}\sum_{j_1, j_2 = 2}^q \Ll(\frac{\partial U}{\partial V_{j_1}}\Rr) \Ll(\frac{\partial U}{\partial V_{j_2}}\Rr)
\Ll(\frac{\partial V_{j_1}}{\partial x^{u_1}_{k_1}}\Rr)
\Ll(\frac{\partial V_{j_2}}{\partial x^{u_2}_{k_2}}\Rr),\\
\mathbf{II} &= d(U)^{d-1}\sum_{j_1, j_2 = 2}^q \Ll(\frac{\partial^2 U}{\partial V_{j_1}\partial V_{j_2}}\Rr) 
\Ll(\frac{\partial V_{j_1}}{\partial x^{u_1}_{k_1}}\Rr)
\Ll(\frac{\partial V_{j_2}}{\partial x^{u_2}_{k_2}}\Rr),\\
\mathbf{III} &= d(U)^{d-1}\sum_{j=2}^q \Ll(\frac{\partial U}{\partial V_{j}}\Rr)
\Ll(\frac{\partial^2 V_{j}}{\partial x^{u_1}_{k_1} \partial x^{u_2}_{k_2}}\Rr).\\
\end{align*}
Notice that 
\begin{align*}
\frac{\partial^2 U}{\partial V_{j_1}\partial V_{j_2}}(\1) &= d^{-1}B^{-1}(A/B)(\delta_{2,j_1} + \delta_{2,j_2}), \\
\frac{\partial^2 V_{j}}{\partial x^{u_1}_{k_1} \partial x^{u_2}_{k_2}}(\1) &= \Ll\{\begin{array}{cc}
    d^{-1}B^{-1}(A/B)(\delta_{j,k_1} + \delta_{j,k_2}) &  u_1 = u_2,\\
    d^{-2}(A/B)^2 \delta_{j,k_1} \delta_{j,k_2} & u_1 \neq u_2.
\end{array}\Rr.
\end{align*}
We put these back to the three terms $\mathbf{I}, \mathbf{II}$ and $\mathbf{III}$ and obtain that
\begin{align*}
\mathbf{I}(\1) &= d^{-2}(1-d^{-1})(A/B)^4 \delta_{2,k_1}\delta_{2,k_2},\\
\mathbf{II}(\1) &= d^{-2}B^{-1}(A/B)^3(\delta_{2,k_1} + \delta_{2,k_2}), \\
\mathbf{III}(\1) &= \Ll\{\begin{array}{cc}
     -2d^{-1}B^{-1}(A/B)^2 &  u_1 = u_2,\\
     -d^{-2}(A/B)^3 \delta_{2,k_1}\delta_{2,k_2}&  u_1 \neq u_2.
\end{array}\Rr.     
\end{align*}
We do a sum and this gives us 
\begin{equation}\label{eq.hD2}
\begin{split}
&\frac{\partial^2 h}{\partial r^2}(\1) \\
&=  \Ll(\sum_{u_1,u_2=1}^d   \theta^{u_1}_{2} \theta^{u_2}_{2}\bigg(d^{-2}(1-d^{-1})(A/B)^4  - d^{-2}(A/B)^3 (1 - \delta_{u_1, u_2})\bigg)\Rr) \\
& \quad + \Ll(2d^{-2}B^{-1}(A/B)^3\sum_{u_1,u_2 = 1}^d  \theta^{u_1}_2\Ll(\sum_{k=2}^q \theta^{u_2}_k\Rr) - 2d^{-1}B^{-1}(A/B)^2 \sum_{u_2=1}^d\Ll( \sum_{k=2}^q \theta^{u_2}_k\Rr)^2 \Rr).        
\end{split}    
\end{equation}
Notice that the maximiser satisfies that $\theta^u_2 = 1$ for all $u \in [d]$, the second line of \eqref{eq.hD2} is 
\begin{multline}\label{eq.hD2.part1}
\sum_{u_1,u_2=1}^d   \theta^{u_1}_{2} \theta^{u_2}_{2}\bigg(d^{-2}(1-d^{-1})(A/B)^4  - d^{-2}(A/B)^3 (1 - \delta_{u_1, u_2})\bigg)\\
=(1-d^{-1})(A/B)^4 - (1-d^{-1})(A/B)^3,
\end{multline}
which is constant. For the third line of \eqref{eq.hD2}, we also use this property to replace $\theta^{u_1}_2=1$ in it
\begin{align*}
&2d^{-2}B^{-1}(A/B)^3\sum_{u_1,u_2 = 1}^d  \theta^{u_1}_2\Ll(\sum_{k=2}^q \theta^{u_2}_k\Rr) - 2d^{-1}B^{-1}(A/B)^2 \sum_{u_2=1}^d\Ll( \sum_{k=2}^q \theta^{u_2}_k\Rr)^2\\
&=2d^{-2}B^{-1}(A/B)^3\sum_{u_1,u_2 = 1}^d  \Ll(\sum_{k=2}^q \theta^{u_2}_k\Rr) - 2d^{-1}B^{-1}(A/B)^2 \sum_{u_2=1}^d\Ll( \sum_{k=2}^q \theta^{u_2}_k\Rr)^2\\
&=2d^{-1}B^{-1}(A/B)^2\sum_{u = 1}^d  \Ll((A/B)  \Ll(\sum_{k=2}^q \theta^{u}_k\Rr) - \Ll(\sum_{k=2}^q \theta^{u}_k\Rr)^2\Rr).
\end{align*}
Notice that $A \leq B$ and $\sum_{k=2}^q \theta^{u}_k \geq \theta^u_2 = 1$, the the quadratic function 
\begin{align*}
\Ll(\sum_{k=2}^q \theta^{u}_k\Rr) \mapsto (A/B)  \Ll(\sum_{k=2}^q \theta^{u}_k\Rr) - \Ll(\sum_{k=2}^q \theta^{u}_k\Rr)^2,    
\end{align*}
is decreasing with respect to $\Ll(\sum_{k=2}^q \theta^{u}_k\Rr)$. Therefore, the maximum is attained when $\theta^u_k = 0$ for $k \in [d] \setminus \{2\}$ and all $u \in [d]$, i.e. $x^u_k = 1$ for $k \in [d] \setminus \{2\}$ and all $u \in [d]$. This is the desired result.
\end{proof}

\subsection{Proof of Theorem~\ref{thm.main}}
With the discussions above, we can now prove our main theorem.
\begin{proof}[Proof of Proposition~\ref{prop.TwoStep}]
Proposition~\ref{prop.TwoStep} follows directly from Proposition~\ref{prop.TwoStepBound} and Proposition~\ref{prop.expansion}. 
\end{proof}
\begin{proof}[Proof of Theorem~\ref{thm.main}] Using \eqref{eq.ffLimit} and Proposition~\ref{prop.TwoStep}, we have
\begin{align*}
\lim_{n \to \infty}  \Ll((r^*_{n+2}-1)^{-2} - (r^*_{n}-1)^{-2}\Rr) \geq \lim_{n \to \infty}  \Ll(((f \circ f)(r^*_{n})-1)^{-2} - (r^*_{n}-1)^{-2}\Rr) = \frac{d^2-1}{3d^2}.
\end{align*}
Then we obtain an estimate
\begin{align*}
\liminf_{n \to \infty} \frac{1}{2n+2} (r^*_{2n+2}-1)^{-2}
&= \liminf_{n \to \infty} \frac{1}{2n+2} \Ll(\sum_{k=1}^{n} \Ll((r^*_{2k+2}-1)^{-2} - (r^*_{2k}-1)^{-2}\Rr)\Rr) \\
&\geq \lim_{k \to \infty} \frac{1}{2}\Ll((r^*_{2k+2}-1)^{-2} - (r^*_{2k}-1)^{-2}\Rr)\\
&= \frac{d^2-1}{6d^2}.
\end{align*}
Similar estimate also works for the odd terms, which implies that 
\begin{align}\label{eq.ratioRate2}
\liminf_{n \to \infty} \frac{1}{n} (r^*_{n}-1)^{-2} \geq \frac{d^2-1}{6d^2}.   
\end{align}
This estimate can also be integrated to that of probability. An easy comparison gives us 
\begin{multline*}
(q-1)(r^*_n)^{-1} \max_{\xi} \mu^\xi_{n,p_c,q}[\sigma_* = 1] + \max_{\xi} \mu^\xi_{n,p_c,q}[\sigma_* = 1]    \leq 1  \\
\leq (q-1)(r^*_n) \min_{\xi} \mu^\xi_{n,p_c,q}[\sigma_* = 1] + \min_{\xi} \mu^\xi_{n,p_c,q}[\sigma_* = 1],
\end{multline*}
which implies that 
\begin{align*}
\frac{1}{1 + (q - 1)r^*_n}  \leq  \min_{\xi}\mu^\xi_{n,p_c,q}[\sigma_* = 1] \leq \frac{1}{q} \leq  \max_{\xi}\mu^\xi_{n,p_c,q}[\sigma_* = 1] \leq \frac{r^*_n}{r^*_n + q - 1}.
\end{align*}
Therefore, for the maximum deviation from $\frac{1}{q}$, we have 
\begin{align*}
\max_\xi \Ll\vert \mu^\xi_{n,p_c,q}[\sigma_* = 1] - \frac{1}{q}\Rr\vert &= \max\Ll\{\max_{\xi}\mu^\xi_{n,p_c,q}[\sigma_* = 1] - \frac{1}{q}, \frac{1}{q} - \min_{\xi}\mu^\xi_{n,p_c,q}[\sigma_* = 1]\Rr\}\\
&\leq \max\Ll\{\frac{(q-1)(r^*_n-1)}{q(r^*_n + q - 1)}, \frac{(q-1)(r^*_n-1)}{q(1 + (q - 1)r^*_n)}\Rr\}.
\end{align*}
This together with \eqref{eq.ratioRate2} gives us 
\begin{align*}
&\liminf_{n \to \infty} \frac{1}{n}\max_\xi \Ll\vert \mu^\xi_{n,p_c,q}[\sigma_* = 1] - \frac{1}{q}\Rr\vert^{-2} \\
&\geq \liminf_{n \to \infty} \frac{1}{n}\max\Ll\{\frac{(q-1)(r^*_n-1)}{q(r^*_n + q - 1)}, \frac{(q-1)(r^*_n-1)}{q(1 + (q - 1)r^*_n)}\Rr\}^{-2}\\
&= \frac{d^2-1}{6d^2}  \Ll(\frac{q^2}{q-1}\Rr)^2.
\end{align*}
This is in fact the upper bound for the convergence rate of the marginal probability. On the other hand, we have proved the lower bound of the convergence rate in Proposition~\ref{prop.pure}
\begin{multline*}
    \limsup_{n \to \infty} \frac{1}{n}\max_\xi \Ll\vert \mu^\xi_{n,p_c,q}[\sigma_* = 1] - \frac{1}{q}\Rr\vert^{-2} \\
    \leq \lim_{n \to \infty} \frac{1}{n} \Ll(  \Ll \vert \mu^{1}_{n,p_c,q}[\sigma_* = 1] - \frac{1}{q} \Rr \vert \Rr)^{-2} = \frac{d^2-1}{6d^2}\Ll(\frac{q^2}{q-1}\Rr)^2. 
\end{multline*}
Thus we finish the proof of the main theorem.
\end{proof}
\begin{remark} Follow the same proof of Theorem~\ref{thm.main}, we can also establish the convergence rate for the subcritical case: under Hypothesis~\ref{hyp} for $p  \in (1 - \frac{q}{d+1}, 1) \cap (0,1)$, we have an exponential convergence
 \begin{align*}
    \lim_{n \to \infty} \frac{1}{n}\log \Ll(\max_{\xi}  \Ll\vert \mu^\xi_{n,p,q}[\sigma_* = 1] - \frac{1}{q}\Rr\vert\Rr) = \log \Ll(\frac{d(1-p)}{p+q-1}\Rr).
 \end{align*}
\end{remark}

\section{Convergence under the double periodic condition}
In this section we prove Proposition \ref{prop.TwoStepfm}, which implies that the two-step iteration $(f \circ f)$, where $f$ is defined in \eqref{eq.f}, drives any initial condition to the fixed point $r=1$.  Proposition \ref{prop.TwoStepfm} was applied to prove the convergence rate for pure boundary conditions in Proposition \eqref{prop.pure}. 

We give an elementary proof for Proposition \ref{prop.TwoStepfm} which can be read independently from the rest of the paper. In fact, we will study the following more general iteration $(f_m \circ f_m)$, where
\begin{equation}\label{eq.fm}
f_m(x) := \Ll(\frac{(p+q-1)+ (m-1+p)(x-1)}{(p+q-1)+m(x-1)}\Rr)^d.    
\end{equation}
Obviously, from the definition \eqref{eq.f}, $f$ is the case of $f_m$ for $m = 1$. The definition of $f_m$ is not artificial, because $(f_m \circ f_m)(r)$ is the mapping $\hat{h}(r)$ in \eqref{eq.hNew} under the similar case of \eqref{eq.Condition3} and $\A(r)$ in \eqref{eq.Ar}: for every $v,u \in [d]$, $\{x^{vu}_k\}_{k \in [q]}$ is the same vector, and there are $m$ entries taking the value $r$ while $(q-m)$ entries taking the value $1$. It implies the two-step iterations under a family of boundary conditions converges to the unique fixed point, and we wish it will be helpful to prove the uniqueness conjecture (Hypothesis \ref{hyp}) of the AF Potts model on trees in the future.

The main result of this part is the following proposition.

\begin{proposition}\label{prop.TwoStepfm}
For any $x \geq 1$ and any integer $1 \leq m \leq q-1$, we have an estimate that $(f_m \circ f_m)'(x) \in \Ll(0, \Ll(\frac{d(1-p)}{p+q-1}\Rr)^2\Rr]$, and $(f_m \circ f_m)'(x) = \Ll(\frac{d(1-p)}{p+q-1}\Rr)^2$ admits a unique solution in $[1,\infty)$ that $x=1$.
\end{proposition}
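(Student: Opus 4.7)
The plan is to first establish positivity by directly differentiating $f_m$, then reduce the sharp upper bound to an algebraic inequality in two variables coupled by a functional constraint, and finally close the inequality globally. Throughout, write $u(z)=B+(m-1+p)(z-1)$, $v(z)=B+m(z-1)$ with $B=p+q-1$, and $\phi(z)=u(z)/v(z)$, so that $f_m=\phi^{d}$. A direct computation gives
\begin{equation*}
f_m'(z) \;=\; d\,\phi(z)^{d-1}\,\frac{(p-1)B}{v(z)^{2}},
\end{equation*}
and since $1\leq m\leq q-1$ forces $u(z),v(z)>0$ on $[0,\infty)$ while $p<1$, we have $f_m'<0$ on $(0,\infty)$. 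By the chain rule, $(f_m\circ f_m)'(x)=f_m'(f_m(x))\,f_m'(x)>0$ for every $x\geq 1$, which settles the positivity part of the claim. At $x=1$, $f_m(1)=1$ and $f_m'(1)=-d(1-p)/B$, so $(f_m\circ f_m)'(1)=(d(1-p)/B)^{2}$, the claimed sharp constant.

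For the upper bound, substitute the formula for $f_m'$ at $x$ and at $y:=f_m(x)$. After squaring and rearranging, the inequality $(f_m\circ f_m)'(x)\leq (d(1-p)/B)^{2}$ is equivalent to
\begin{equation*}
B^{4}\,\phi(x)^{d-1}\phi(y)^{d-1} \;\leq\; v(x)^{2}v(y)^{2}.
\end{equation*}
The key algebraic identity is $v(z)[m\phi(z)-(m-1+p)]=B(1-p)$, verified from the elementary expansion $mu(z)-(m-1+p)v(z)=B(1-p)$. Setting $\alpha:=\phi(x)$ and $\beta:=\phi(y)$, this identity lets us substitute $v(x)v(y)=B^{2}(1-p)^{2}/\{[m\alpha-(m-1+p)][m\beta-(m-1+p)]\}$, reducing the inequality to
\begin{equation*}
(\alpha\beta)^{(d-1)/2}\,[m\alpha-(m-1+p)]\,[m\beta-(m-1+p)] \;\leq\; (1-p)^{2},
\end{equation*}
subject to the constraint $\beta=\phi(\alpha^{d})$, i.e.\ $\beta-1=(1-p)(1-\alpha^{d})/[B-m(1-\alpha^{d})]$. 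For $x\in[1,\infty)$, $\alpha\in((m-1+p)/m,1]$ and $\beta\geq 1$, with equality at $\alpha=\beta=1$.

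With $\beta$ eliminated via the constraint, the left-hand side becomes a single-variable function $G(\alpha)$ on $((m-1+p)/m,1]$, and the goal is to show $G(\alpha)\leq G(1)=(1-p)^{2}$ with equality only at $\alpha=1$. My plan is to work directly with $\Phi(x):=(f_m\circ f_m)'(x)$ and show $\Phi'(x)\leq 0$ on $[1,\infty)$ with strict inequality for $x>1$: expanding $\Phi'(x)=f_m''(y)f_m'(x)^{2}+f_m'(y)f_m''(x)$ using the formulas for $f_m'$ and $f_m''$ in terms of $\phi$, $v$ and their derivatives, and invoking the identity $v(z)[m\phi(z)-(m-1+p)]=B(1-p)$, one should be able to factor $\Phi'(x)$ into a product of manifestly positive prefactors times a bracket in $\alpha,\beta,m,p,d,B$ of definite sign. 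The local Taylor expansion at $\alpha=1$ already produces the correct behavior, namely $\Phi'(1)\leq 0$ with equality precisely when $p=p_c$, and at criticality the coefficient of $(\alpha-1)^{2}$ in $G(\alpha)-(1-p)^{2}$ is negative (a check parallel to the computation of $c_{3}=-(d^{2}-1)/d^{2}$ in Proposition~\ref{prop.pure}). The main obstacle I anticipate is globalizing this estimate: the constraint couples $\alpha$ and $\beta$ nonlinearly through $\alpha^{d}$, so the factorization of $\Phi'$ (equivalently, the sign analysis of $G'$) must be arranged to work uniformly in $d\geq 2$ and $m\in\{1,\ldots,q-1\}$ across the entire range $\alpha\in((m-1+p)/m,1]$. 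Strict monotonicity of $\Phi$ past $x=1$ will deliver the uniqueness of the maximizer as an immediate consequence.
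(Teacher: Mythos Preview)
Your positivity argument and the computation $(f_m\circ f_m)'(1)=(A/B)^2$ with $A=d(1-p)$, $B=p+q-1$ are correct, and your reduction of the upper bound to the single inequality
\[
(\alpha\beta)^{(d-1)/2}\,[m\alpha-(m-1+p)]\,[m\beta-(m-1+p)] \;\leq\; (1-p)^{2},
\qquad \beta=\phi(\alpha^{d}),
\]
via the identity $v(z)[m\phi(z)-(m-1+p)]=B(1-p)$ is clean and entirely equivalent to the paper's setup.

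The genuine gap is the step you yourself flag: you do not actually prove that $\Phi=(f_m\circ f_m)'$ is monotone on $[1,\infty)$, and there is no reason to expect that a single factorization of $\Phi'$ will have definite sign uniformly in $(m,d,p)$ and in $x$. In fact the paper's proof is organized precisely so as \emph{not} to rely on global monotonicity of $\Phi$. Writing $(f_m\circ f_m)'(x)=(A/B)^2\,G_m(x)$, the paper introduces
\[
H_m(x)=v(x)v(y)-B^2,
\qquad
K_m(x)=v(x)v(y)-u(x)u(y),
\qquad y=f_m(x),
\]
so that $G_m=(1-\tfrac{H_m}{B^2+H_m})^{2}(1-\tfrac{K_m}{B^2+H_m})^{d-1}$. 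On intervals where $K_m\geq 0$ one gets $G_m\leq 1$ for free (no derivative needed), while on intervals where $K_m<0$ the paper shows $G_m'<0$. The hinge is a single pointwise inequality,
\[
(d-1)K_m'(x)+2H_m'(x)>0 \quad\text{for all }x>1,
\]
obtained by writing both second derivatives as $(g_m)^{d-2}v(x)^{-4}$ times an explicit affine function of $(x-1)$ and checking that the resulting coefficients are positive. This makes the sign of the bracket in $G_m'$ transparent whenever $K_m<0$, and the interval decomposition propagates the bound $G_m\leq 1$ across the zeros of $K_m$. Your plan would have to succeed on the $K_m>0$ intervals as well, where the two contributions in $G_m'$ compete; the paper's remark that the sign of $K_m''$ is parameter-dependent is a warning that this competition is delicate. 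So the missing idea is not a sharper expansion but the \emph{splitting according to the sign of $K_m$}, which replaces your global monotonicity of $\Phi$ by a local one that is actually provable with the available second-derivative formulas.
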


From Proposition~\ref{prop.TwoStepfm} and since we assume the subcritical or critical phase, we have $\frac{d(1-p)}{p+q-1} \in (0,1]$. Then the iteration gives a strictly decreasing sequence once starting from some initial value above $1$, and eventually converges to $1$ as it is the unique fixed point; see Corollary~\ref{cor.rn} for details. 

\begin{remark}
Proposition~\ref{prop.TwoStepfm} covers the special case of the uniqueness conjecture for a class of the ``double periodic boundary conditions", namely, for the AF Potts model on a regular tree of height $n$, we assume the boundary conditions are given by $d^2$ identical copies of the boundary conditions for sub-trees of height $n-2$. One strategy to prove the uniqueness conjecture in future is to approximate any Gibbs measures on trees by double periodic Gibbs measures. 
\end{remark}

We first give the strategy of the proof of Proposition~\ref{prop.TwoStepfm}. Recall the constant $A = d(1-p)$ and $B=p+q-1$ defined in \eqref{eq.AB}, then we have ${f_m(x) := \Ll(\frac{B+ (m-1+p)(x-1)}{B+m(x-1)}\Rr)^d}$ and
\begin{align*}
(f_m \circ f_m)'(x) &= \frac{A^2}{B^2}\Ll(\frac{B^2}{(B + m(x-1))(B + m(f_m(x)-1))}\Rr)^2 \\
& \qquad \times \Ll(\frac{(B + (m-1+p)(x-1))(B + (m-1+p)(f_m(x)-1))}{(B + m(x-1))(B + m(f_m(x)-1))}\Rr)^{d-1} .    
\end{align*}

We introduce two functions 
\begin{equation}\label{eq.HKm}
\begin{split}
H_m(x) &:= \bigg(B + m(x-1)\bigg)\bigg(B + m(f_m(x)-1)\bigg) - B^2,\\
K_m(x) &:= \bigg(B + m(x-1)\bigg)\bigg(B + m(f_m(x)-1)\bigg) \\
& \qquad - \bigg(B + (m-1+p)(x-1)\bigg)\bigg(B + (m-1+p)(f_m(x)-1)\bigg). 
\end{split}
\end{equation}
Keep in mind that $A = B$ for the critical case, so 
 $(f_m \circ f_m)'(x) \in \Ll(0, \Ll(\frac{d(1-p)}{p+q-1}\Rr)^2\Rr]$ is equivalent to prove that 
\begin{align}\label{eq.Gm}
G_m(x) := \Ll(1 - \frac{H_m(x)}{B^2 + H_m(x)}\Rr)^2 \Ll(1 - \frac{K_m(x)}{B^2 + H_m(x)}\Rr)^{d-1} \leq 1,    
\end{align}
and the solution of $(f_m \circ f_m)'(x) = \Ll(\frac{d(1-p)}{p+q-1}\Rr)^2$ is that of $G_m(x)=1$. Thus, we focus on the function $G_m(x)$ and resume some elementary calculations in the following lemma. We also introduce 
\begin{align}\label{eq.gm}
g_m(x) := \frac{B+ (m-1+p)(x-1)}{B+m(x-1)},     
\end{align}
so it will help simplify the notation.

\begin{lemma}\label{lem.pre}
\begin{enumerate}
\item Using the notation $g_m$ in \eqref{eq.gm}, we have 
\begin{equation}\label{eq.fgm}
\begin{split}
f_m = (g_m)^d, \qquad f_m' = d(g_m)^{d-1}g_m',\\
f_m'' = d(d-1)(g_m)^{d-2}(g_m')^2 + d (g_m)^{d-1}g_m'',     
\end{split}
\end{equation}
where the expression of $g_m'$ and $g_m''$ are 
\begin{align}\label{eq.gmDff}
g_m'(x) = \frac{B(p-1)}{(B+ m(x-1))^2}, \qquad g_m''(x) = \frac{2mB(1-p)}{(B+ m(x-1))^3}.    
\end{align}
We also have $f_m(1) = 1$ and $f'_m(1) = -A/B$.
\item Some properties about $H_m(x)$: for all $1 \leq m \leq q-1$, we have
\begin{itemize}
    \item $H_m(1) = 0$ and $H_m(x) > 0$ for all $x > 1$. 
    \item $H_m'(1) = mB(1 - A/B)$, $H_m'(x) > 0$ for all $x > 1$.
    \item The explicit expression for $H_m''(x)$ is 
    \begin{align}
        H_m''(x) = m (g_m)^{d-2}\bigg(B+ m(x-1)\bigg)^{-4}\bigg(\gamma_H + \beta_H(x-1)\bigg),
    \end{align}
    with the quantity $\beta_H, \gamma_H$ defined as 
    \begin{align}
        \beta_H &:= (1-d^{-1})m A^2 B^2, \label{eq.betaH} \\
        \gamma_H &:= (1-d^{-1})A^2 B^3.\label{eq.gammaH}
    \end{align}
\end{itemize}
\item Some properties about $K_m(x)$: denote by 
\begin{align}\label{eq.Cm}
    C_m : = 2m-1+p, 
\end{align}
then we have
\begin{itemize}
    \item $K_m(1) = 0$. 
    \item $K_m'(1) = (1-p)B(1 - A/B)$.
    \item The explicit expression for $K_m''(x)$ is 
    \begin{align}
        K_m''(x) = (1-p) (g_m)^{d-2}\bigg(B+ m(x-1)\bigg)^{-4}\bigg(\gamma_K + \beta_K(x-1)\bigg),
    \end{align}
    with the quantity $\beta_K, \gamma_K$ defined as 
    \begin{align}
        \beta_K &:= AB^2(-2(C_m - m)^2 + (1-d^{-1})AC_m), \label{eq.betaK} \\
        \gamma_K &:= AB^3(A-C_m). \label{eq.gammaK}
    \end{align}
\end{itemize}
\end{enumerate}
\end{lemma}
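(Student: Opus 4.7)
The plan is to treat the lemma as three nested layers of differentiation: Part (1) supplies the differential calculus of $g_m$ and $f_m$, and Parts (2) and (3) then reduce to algebraic manipulations after substituting the Part (1) formulas into the definitions \eqref{eq.HKm} of $H_m$ and $K_m$.

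For Part (1), the identity $f_m = g_m^d$ is immediate from \eqref{eq.fm} and \eqref{eq.gm}, and the chain and product rules give the formulas for $f_m'$ and $f_m''$. For $g_m'$, the quotient rule applied to \eqref{eq.gm} produces a numerator $(m-1+p)(B+m(x-1)) - m(B+(m-1+p)(x-1)) = B(p-1)$, giving $g_m'(x) = B(p-1)/(B+m(x-1))^2$; differentiating again gives $g_m''$. Evaluating at $x = 1$ yields $g_m(1) = 1$ and $g_m'(1) = (p-1)/B$, hence $f_m(1) = 1$ and $f_m'(1) = d(p-1)/B = -A/B$.

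For Part (2), the decisive algebraic identity is
\begin{align*}
(1 - g_m(x))\Ll(B + m(x-1)\Rr) = (1-p)(x-1),
\end{align*}
which is a direct consequence of \eqref{eq.gm}. Expanding $H_m(x) = (B+m(x-1))[B+m(f_m-1)] - B^2$ and using $f_m - 1 = g_m^d - 1 = -(1-g_m)\sum_{k=0}^{d-1} g_m^k$, I would obtain the factored form
\begin{align*}
H_m(x) = m(x-1)\Ll[B - (1-p)\sum_{k=0}^{d-1}g_m(x)^k\Rr].
\end{align*}
For $x > 1$, $g_m(x) \in (1-(1-p)/m, 1)$, so the bracket strictly exceeds $B - A \geq 0$ (using the hypothesis $p \geq 1 - q/(d+1)$, which is exactly $A \leq B$), yielding $H_m(x) > 0$. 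Differentiating the factored form and observing $g_m'(x) < 0$ yields $H_m'(x) > 0$ for $x > 1$, and the value $H_m'(1) = m(B-A)$ follows by setting $x = 1$. For $H_m''$, I would write $H_m'(x) = m(B+m(f_m-1)) + m\phi f_m'$ with $\phi := B + m(x-1)$, so that $H_m''(x) = 2m^2 f_m' + m\phi f_m''$. Substituting the explicit formulas, the two terms proportional to $g_m^{d-1}/\phi^2$ cancel, leaving $H_m''(x) = dm(d-1)B^2(1-p)^2 g_m^{d-2}/\phi^3$, which rewrites as $m g_m^{d-2}\phi^{-4}(\gamma_H + \beta_H(x-1))$ upon using $d(d-1)(1-p)^2 = (1-d^{-1})A^2$.

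For Part (3), $K_m(1) = 0$ is immediate. Differentiating $K_m$ and evaluating at $x=1$ gives
\begin{align*}
K_m'(1) = mB - mA - (m-1+p)B + (m-1+p)A = (1-p)(B-A).
\end{align*}
For $K_m''$, the identities $m^2 - (m-1+p)^2 = (1-p)C_m$ and $m(B+m(x-1)) - (m-1+p)(B+(m-1+p)(x-1)) = (1-p)(B + C_m(x-1))$ combine to give
\begin{align*}
K_m''(x) = 2(1-p) C_m f_m'(x) + (1-p)(B + C_m(x-1)) f_m''(x).
\end{align*}
Substituting the explicit forms of $f_m'$ and $f_m''$ and expanding in $(x-1)$, the $(x-1)^2$ contributions cancel identically, while the constant and linear coefficients (via the algebraic identity $-2C_m^2 + 2mC_m + 2m(m-1+p) = -2(m-1+p)^2$, which follows from $C_m = m + (m-1+p)$) collapse into $\gamma_K$ and $\beta_K$. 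The main obstacle is the bookkeeping in Part (3): one must track the four contributions to $K_m''$ arising from pairing the two summands of $f_m''$ with the constant and linear pieces of $B + C_m(x-1)$, and verify that they conspire both to eliminate the $(x-1)^2$ term and to match $\gamma_K, \beta_K$ exactly; once the decomposition $C_m = m + (m-1+p)$ is systematically exploited, all remaining simplifications are routine.
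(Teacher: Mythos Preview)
Your proposal is correct and, for Parts (1) and (3), follows essentially the paper's proof: the paper also writes $K_m''(x) = (1-p)\big[(B+C_m(x-1))f_m'' + 2C_m f_m'\big]$ and then substitutes the $g_m$-expansions, exactly as you outline.

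In Part (2) you take a slightly different route for the positivity claims. The paper differentiates $H_m$ twice in its product form, obtains the explicit formula for $H_m''(x)>0$ first, and then argues by convexity together with $H_m'(1)=m(B-A)\ge 0$ that $H_m'(x)>0$ and $H_m(x)>0$ for $x>1$. You instead use the identity $(1-g_m(x))(B+m(x-1))=(1-p)(x-1)$ to factorize
\[
H_m(x)=m(x-1)\Big[B-(1-p)\sum_{k=0}^{d-1}g_m(x)^k\Big],
\]
and read off $H_m(x)>0$ and $H_m'(x)>0$ directly from $g_m(x)<1$ and $g_m'(x)<0$. Your argument is a bit more transparent for the positivity statements (it makes the role of the hypothesis $A\le B$ explicit at the level of the bracket), whereas the paper's convexity argument is more uniform with its treatment of $K_m$. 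For the explicit form of $H_m''$ you then revert to the same computation as the paper, so the two proofs coincide from that point on.
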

\begin{proof}
\begin{enumerate}
    \item This is the direct calculation of $f_m$ in function of $g_m$. 
    \item $H_m(1) = 0$ comes direct from the fact that $f_m(1) = 1$. We calculate the derivative of $H_m$, which is 
    \begin{align*}
    H_m'(x) &=  m\Ll(\bigg(B + m(f_m(x)-1)\bigg) + f_m'(x)\bigg(B + m(x-1)\bigg)\Rr)\\
    &= m\bigg((B-m)(f_m'(x) + 1) + m(f_m(x) + x f_m'(x))\bigg).
    \end{align*}
    We evaluate it at $1$ using $f_m(1) = 1$ and $f_m'(1) = -A/B$, then we obtain that ${H_m'(1) = mB(1-A/B)}$. We calculate the second derivative 
    \begin{align*}
    H_m''(x) &= m\bigg((B-m)f_m''(x) + m(2f_m'(x) + x f_m''(x))\bigg)\\
    &= m\bigg((B+m(x-1))f_m''(x) + 2m f_m'(x) \bigg).
    \end{align*}
    We put in the expression of $f_m''(x)$ in function of $g_m(x)$ and its derivative and obtain that 
    \begin{align*}
    H_m''(x) &= m\bigg((B+m(x-1))f_m''(x) + 2m f_m'(x) \bigg)\\
    &= m\bigg((B+m(x-1))\Ll( d(d-1)(g_m)^{d-2}(g_m')^2 + d (g_m)^{d-1}g_m''\Rr) + 2m (d(g_m)^{d-1}g_m') \bigg)\\
    &= m(g_m)^{d-2}\bigg((B+m(x-1))\Ll( d(d-1)(g_m')^2 + d g_m'' g_m \Rr) + 2m (d g_m' g_m) \bigg).
    \end{align*}
    We keep the first factor $(g_m)^{d-2}$, while put in the exact expression of $g_m, g_m'$ and $g_m''$ in the other terms
    \begin{align*}
    H_m''(x) &=  m(g_m)^{d-2}(B + m(x-1))^{-4} \times \\
    & \Bigg((B+m(x-1))\bigg( (1-d^{-1})A^2 B^2 + (2m)AB\big(B+(m-1+p)(x-1)\big) \bigg)\\
    & \qquad \Ll. - (2m)AB \bigg(\big(B+(m-1+p)(x-1)\big)\big(B+m(x-1)\big)\Bigg) \Rr)\\
    &= m(g_m)^{d-2}(B + m(x-1))^{-4}\bigg((1-d^{-1})A^2 B^2 (B+m(x-1))\bigg).
    \end{align*}
    This gives the expression of $\beta_H$ and $\gamma_H$, and also proves that
    \begin{align*}
    \forall  x \geq 1, \qquad   H_m''(x) \geq m B^{-4}\bigg((1-d^{-1})A^2 B^3\bigg) > 0.    
    \end{align*}
    Combined this with the expression of $H_m'(1)$, we know that $H_m'(x), H_m(x) > 0$ for all $x > 1$. 
    \item The part about the $K_m$ is similar. $K_m(1) = 0$ is also the result from $f_m(1) = 1$. We calculate its derivative that 
    \begin{align*}
    K_m'(x) &=  m \bigg(B + m(f_m(x)-1)\bigg)  + m f_m'(x)\bigg(B + m(x-1)\bigg) \\
    \qquad &- (m-1+p)\bigg(B + (m-1+p)(f_m(x)-1)\bigg)  - (m-1+p) f_m'(x)\bigg(B + (m-1+p)(x-1)\bigg) \\
    &= (1-p)f_m'(x)\bigg(B + (2m-1+p)(x-1)\bigg) + (1-p)\bigg(B + (2m-1+p)(f_m(x)-1)\bigg).
    \end{align*}
    Recall that $C_m = 2m-1+p$, we have 
    \begin{equation}\label{eq.KmD1}
    \begin{split}
    K_m'(x) &=  (1-p)f_m'(x)\bigg(B + C_m(x-1)\bigg) + (1-p)\bigg(B + C_m(f_m(x)-1)\bigg)\\  
    &=(1-p)\bigg((B-C_m)(f_m'(x) + 1) + C_m(f_m(x) + x f_m'(x))\bigg).    
    \end{split}
    \end{equation}
    $K_m'(1) = (1-p)B(1-A/B)$ is a direct result from ${f_m'(1) = -A/B}$ and ${f_m(1) = 1}$. Then we calculate the second derivative that 
    \begin{align*}
    K_m''(x) &= (1-p)\bigg((B-C_m)f_m''(x) + C_m(2f_m'(x) + x f_m''(x))\bigg)\\
    &= (1-p)\bigg((B+C_m(x-1))f_m''(x) + 2C_m f_m'(x) \bigg).
    \end{align*}
    We put in the expression of $f_m''(x)$ in function of $g_m(x)$ and its derivative and obtain that 
    \begin{align*}
    K_m''(x) &= (1-p)\bigg((B+C_m(x-1))\Ll( d(d-1)(g_m)^{d-2}(g_m')^2 + d (g_m)^{d-1}g_m''\Rr) + 2C_m (d(g_m)^{d-1}g_m') \bigg)\\
    &= (1-p)(g_m)^{d-2}\bigg((B+C_m(x-1))\Ll( d(d-1)(g_m')^2 + d g_m'' g_m \Rr) + 2C_m (d g_m' g_m) \bigg).
    \end{align*}
    Once again, we keep the first factor $(g_m)^{d-2}$ and put in the exact expression of $g_m, g_m'$ and $g_m''$ in the other terms
    \begin{align*}
    K_m''(x) &=  (1-p)(g_m)^{d-2}(B + m(x-1))^{-4} \times \\
    & \Bigg((B+C_m(x-1))\bigg( (1-d^{-1})A^2 B^2 + (2m)AB\big(B+(m-1+p)(x-1)\big) \bigg)\\
    & \qquad \Ll. - (2C_m)AB \bigg(\big(B+(m-1+p)(x-1)\big)\big(B+m(x-1)\big)\Bigg) \Rr).
    \end{align*}
    From this, the term $(x-1)^2$ vanishes and we get the desired $\beta_K, \gamma_K$.
\end{enumerate}
\end{proof}
\begin{remark}
We remark here that the monotonicity of $K_m$ is not as good as that of $H_m$, because its sign of second derivative depends on the regime of $x$ and the parameters; see the expression of $\gamma_K, \beta_K$. 
\end{remark}

With the preparation of Lemma~\ref{lem.pre}, we are now ready to prove Proposition~\ref{prop.TwoStepfm}.
\begin{proof}[Proof of Proposition~\ref{prop.TwoStepfm}]
We aim to prove \eqref{eq.Gm}. Since we always have $H_m(x) \geq 0$ for $x \geq 1$ from Lemma~\ref{lem.pre}, the key is the sign of $K_m(x)$. For the intervals $K_m(x) \geq 0$, it is clear $G_m(x) \leq 1$ from the expression \eqref{eq.Gm}, so we are interested in the part $K_m(x) \leq 0$. We calculate the derivative of $G_m(x)$ in function of $K_m(x)$ and $H_m(x)$
\begin{equation}\label{eq.GmDiff}
\begin{split}
G_m' &= \Ll(1 - \frac{H_m}{B^2 + H_m}\Rr) \Ll(1 - \frac{K_m}{B^2 + H_m}\Rr)^{d-2} \\
& \qquad \times\Bigg( (d-1)\Ll(\frac{-K_m'(B^2 + H_m)+K_m H_m'}{(B^2 + H_m)^2}\Rr)\Ll(\frac{B^2}{B^2 + H_m}\Rr) \\
& \qquad \qquad + 2 \Ll(\frac{B^2+H_m-K_m}{B^2 + H_m}\Rr)\Ll( \frac{-B^2 H_m'}{(B^2 + H_m)^2}\Rr)\Bigg)\\
&= \Ll(1 - \frac{H_m}{B^2 + H_m}\Rr) \Ll(1 - \frac{K_m}{B^2 + H_m}\Rr)^{d-2}\Ll(\frac{B^2}{(B^2 + H_m)^3}\Rr) \times\\
& \qquad \Ll(-(B^2+H_m)\big((d-1)K_m'+2H_m'\big) + (d+1)H_m'K_m\Rr).    
\end{split}
\end{equation}
The sign of $G_m'$ depends on the last line. Here one important observation is 
\begin{align}\label{eq.HKObservation}
\forall x > 1, \qquad (d-1)K_m'(x)+2H_m'(x) > 0.    
\end{align}
We verify this observation by writing the sum as integration of second order derivative from Lemma~\ref{lem.pre}
\begin{equation}\label{eq.ObserDiff}
\begin{split}
&(d-1)K_m'(x)+2H_m'(x) \\
&= (d-1)K_m'(1)+2H_m'(1) \\
& \quad + \int_{1}^x  (g_m)^{d-2}\bigg(B+ m(y-1)\bigg)^{-4}\bigg((d-1)(1-p)\big(\gamma_K + \beta_K(y-1)\big) + 2 m \big(\gamma_H + \beta_H (y-1) \big)\bigg) \, \d y.
\end{split}
\end{equation}
From Lemma~\ref{lem.pre}, we know that $K_m'(1), H_m'(1) \geq 0$ as $A \leq B$ from \eqref{eq.AB2}. Then, we apply the explicit expression \eqref{eq.gammaH}, \eqref{eq.gammaK}, \eqref{eq.betaH}, \eqref{eq.betaK}
\begin{align*}
(d-1)(1-p) \gamma_K + 2m \gamma_H &= (d-1)(1-p)AB^3(A-C_m) + 2m(1-d^{-1})A^2B^3\\
&=(1-d^{-1})A^2B^3(A-C_m + 2m) \\
&> 1-p > 0.
\end{align*}
Here we use the expression $A = d(1-p)$ and ${C_m = 2m-1+p \leq 2m}$ in the last line. Similarly, for the coefficient of $(y-1)$ we have 
\begin{align*}
&(d-1)(1-p) \beta_K + 2m \beta_H \\
&= (d-1)(1-p)AB^2(-2(C_m - m)^2 + (1-d^{-1})AC_m)  + 2m^2(1-d^{-1})A^2B^2\\
&= (1-d^{-1})A^2B^2\bigg(2m^2-2(C_m - m)^2 + (1-d^{-1})AC_m\bigg)\\
&> 0.
\end{align*}
In the last line, we use the fact $C_m - m = 2m-1+p -m \in (0,m)$. Therefore, we justify the observation \eqref{eq.HKObservation}. 

\medskip

We now use this observation to conclude the proof. Because $K_m$ is a continuous function, the part $x \geq 1$ can be decomposed into the intervals of $K_m(x) \geq 0$ and $K_m(x) < 0$, i.e.
\begin{equation}\label{eq.Inteval}
\begin{split} 
&x_1^+ := 1,\\
\forall i \in \N^+, \quad & x_i^- := \inf\{x: x \geq x_i^+, K_m(x) < 0\}, \\
\forall i \in \N^+, i\geq 2, \quad  & x_i^+ := \inf\{x: x \geq x_{i-1}^-, K_m(x) \geq 0\},\\
\forall i \in \N^+, \quad & I_i^+:=[x_i^+, x_i^-], \qquad I_i^-:= (x_i^-, x_{i+1}^+).
\end{split}
\end{equation}
Then we have 
\begin{align*}
\{x: x \geq 1, K_m(x) \geq 0\} = \bigcup_{i \in \N^+} I_i^+, \qquad \{x: x \geq 1, K_m(x) < 0\} = \bigcup_{i \in \N^+} I_i^-.    
\end{align*}
As stated in the previous paragraphs that,  $H_m(x) \geq 0$ for $x \geq 1$ from Lemma~\ref{lem.pre} and the expression \eqref{eq.Gm} imply that $G_m(x) \leq 1$ for $K_m(x) \geq 0$. The observation \eqref{eq.HKObservation} and $H_m'(x) \geq 0$ for $x \geq 1$ from Lemma~\ref{lem.pre} give that that, for the case $K_m(x) < 0$, the last line in  \eqref{eq.GmDiff} is strictly negative, and $G_m'(x) < 0$. Therefore, we obtain that 
\begin{align}\label{eq.GmDecreasing}
\forall i \in \N^+, \forall x \in I_i^- = (x_i^-, x_{i+1}^+), \qquad G_m(x) < G_m(x_i^-),
\end{align}
and 
\begin{align*}
    \sup_{x \geq 1} G_m(x) = \sup_{x \geq 1, K_m(x) \geq 0} G_m(x) \leq 1,
\end{align*}
which concludes \eqref{eq.Gm} and $(f_m \circ f_m)'(x) \in \Ll(0, \Ll(\frac{d(1-p)}{p+q-1}\Rr)^2\Rr]$.

Finally, we prove the unique solution of $(f_m \circ f_m)'(x) = \Ll(\frac{d(1-p)}{p+q-1}\Rr)^2$. As discussed above, it is equivalent to $G_m(x) = 1$ and is only possible to attain on $K_m(x) \geq 0$ thanks to the observation \eqref{eq.GmDecreasing}. Viewing the expression \eqref{eq.Gm}, this requires that $K_m(x) = 0$ and $H_m(x) = 0$. For the latter, the unique solution is $x=1$ from (2) of Lemma~\ref{lem.pre}, and $x=1$ also verifies the former equation. This finishes the proof of the uniqueness.
\end{proof}

In the end, we prove the convergence of $r_n$ at critical and subcritical regime as a simple application.
\begin{corollary}\label{cor.rn}
Let $q, d \in \N^+$ and $q \geq 2$. Also let $p  \in [1 - \frac{q}{d+1}, 1) \cap (0,1)$, $(r_n)_{n \in N^+}$ defined in \eqref{eq.ratio1} converges to $1$.
\end{corollary}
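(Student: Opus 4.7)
The plan is to apply Proposition~\ref{prop.TwoStepfm} with $m = 1$, so that the two-step map $F := f \circ f$ (with $f = f_1$) has derivative bounded by $\bigl(\tfrac{d(1-p)}{p+q-1}\bigr)^2 \leq 1$ on $[1,\infty)$, with equality only at the fixed point $x=1$. This upgrades the qualitative convergence of $(r_n)$ to a monotone-sequence argument on the even-indexed sub-sequence, with the odd sub-sequence following by continuity.

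First I would separate the even- and odd-indexed sub-sequences. By Lemma~\ref{lem.Element}, $f$ maps $[0,1]$ into $[1,\infty)$ and $[1,\infty)$ into $(0,1]$, so the signs of $r_n - 1$ alternate along the one-step recursion $r_{n+1} = f(r_n)$ of Lemma~\ref{lem.pure}. A direct computation from the pure boundary $\xi \equiv 2$ at level $n = 1$ gives $r_1 = p^d \in (0,1)$, so that $r_{2k+1} \in (0,1]$ and $r_{2k} \in [1,\infty)$ for every $k \geq 1$.

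The key step is to upgrade Proposition~\ref{prop.TwoStepfm} into the strict inequality $F(x) < x$ for all $x > 1$. By the fundamental theorem of calculus,
\begin{align*}
F(x) - 1 = F(x) - F(1) = \int_1^x F'(y)\, \d y.
\end{align*}
In the subcritical case, $F'(y) \leq (A/B)^2 < 1$ delivers the bound immediately; in the critical case $A = B$, the inequality $F'(y) \leq 1$ is only strict for $y > 1$, but this is exactly what the uniqueness clause of Proposition~\ref{prop.TwoStepfm} provides. Once $F(x) < x$ is established on $(1,\infty)$, the even sub-sequence $(r_{2k})_{k \geq 1} \subset [1,\infty)$ is monotone decreasing and bounded below by $1$, hence converges to some $L \geq 1$. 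Continuity of $F$ forces $L = F(L)$, and the strict inequality $F(x) < x$ on $(1,\infty)$ forces $L = 1$. Continuity of $f$ then yields $r_{2k+1} = f(r_{2k}) \to f(1) = 1$, so the full sequence converges to $1$.

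The main obstacle is the critical case: a naive contraction argument fails because $F'$ attains the value $1$ at the fixed point itself, so one cannot extract a global Lipschitz constant strictly less than $1$ on any neighborhood of $1$. The uniqueness part of Proposition~\ref{prop.TwoStepfm} — that $(f \circ f)'(x) = (A/B)^2$ has $x = 1$ as its only solution in $[1,\infty)$ — is precisely what rules out a spurious limit $L > 1$ and makes the monotone-convergence strategy go through uniformly across the subcritical and critical regimes.
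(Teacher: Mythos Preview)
Your proof is correct and follows essentially the same route as the paper: both arguments pass to the two-step map $F = f\circ f$ on the even sub-sequence, use Proposition~\ref{prop.TwoStepfm} to bound $F'\le 1$ on $[1,\infty)$ with equality only at $x=1$, deduce that $1$ is the unique fixed point (the paper via the mean value theorem and a contradiction, you via the fundamental theorem of calculus and the strict inequality $F(x)<x$ for $x>1$), and then recover the odd sub-sequence by continuity of $f$. Your explicit computation $r_1=p^d$ to pin down the parity is a nice touch that the paper leaves as ``a simple induction.''
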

\begin{proof}
From Lemma~\ref{lem.pure}, we have $r_{2n+2} = (f \circ f)(r_{2n})$ with $f = f_1$ in \eqref{eq.fm} and for even terms $r_{2n} \geq 1$ by a simple induction. Then $p  \in [1 - \frac{q}{d+1}, 1) \cap (0,1)$ implies that $(f \circ f)'(x) \leq 1$ from Proposition~\ref{prop.TwoStepfm}. Thus, by the mean value theorem, 
\begin{align*}
\forall x \geq 1, \exists z \in (1,x), \text{ such that }  \qquad  (f \circ f)(x) - 1 = (f \circ f)'(z)(x-1) \leq x - 1,
\end{align*}
and the mapping $(f \circ f)$ admits a fixed point $x^*$

We prove by contradiction that $x^* = 1$ is the unique fixed point. Suppose that there is another fixed point $x^* > 1$ for $(f \circ f)$, then once again by the mean value theorem,
\begin{align*}    
\exists z \in (1,x), \qquad x^* - 1 = (f \circ f)(x^*) - 1 = (f \circ f)'(z)(x^*-1) \leq x^* - 1,
\end{align*}
which implies that $(f \circ f)'(z) = 1$. This contradicts Proposition~\ref{prop.TwoStepfm} that $1$ is the unique solution of $(f_m \circ f_m)'(x) = \Ll(\frac{d(1-p)}{p+q-1}\Rr)^2$.

Finally, the limit of $(r_{2n})_{n \in \N^+}$ follows the iteration $r_{2n+2} = (f \circ f)(r_{2n})$ and the unique fixed point of $(f \circ f)$ under the assumption $p  \in [1 - \frac{q}{d+1}, 1) \cap (0,1)$. The limit of $(r_{2n+1})_{n \in \N^+}$ follows that of the even terms and the continuity of $f$.  
\end{proof}

\subsection*{Acknowledgements}
The research of C.Gu and W.Wu are supported in part by the National Key R\&D Program of China (No. 2021YFA1002700). The research of W.Wu is supported in part by a Shanghai Municipal Education Commission grant. K. Yang would like to thank the NYU Shanghai and ECNU Mathematical Institute for hospitality. 

\bibliographystyle{abbrv}
\bibliography{Ref}

\end{document}